\newtheorem{thm}{Theorem}[section]
\newtheorem{prop}[thm]{Proposition}
\newtheorem{cor}[thm]{Corollary}
\newtheorem{lem}[thm]{Lemma}
\newtheorem{defn}[thm]{Definition}
\newtheorem{remark}[thm]{Remark}
\newcommand{\R}{{\mathbb R}}
\newcommand{\Int}{\operatorname{Int}}
 \author{Estibalitz Durand-Cartagena}
\address[E. Durand-Cartagena]
{UNED. ETSI Industriales\\
Departamento de Matemática Aplicada \\
Despacho 2.49\\
Juan del Rosal 12 \\
28040 Madrid}
\email[E. Durand-Cartagena]{edurand@ind.uned.es}
\author{Antoine Lemenant}
\address[A. Lemenant]{Université Paris 7 (Denis Diderot)\\
Laboratoire Jacques Louis Lions (CNRS UMR 7598)\\
Université Paris Diderot - Paris 7 \\
U.F.R de Mathématiques \\
Bâtiment Sophie Germain \\
75205 Paris Cedex 13 (France)}
\email[A.~Lemenant]{lemenant@ljll.univ-paris-diderot.fr}
\date{21th February, 2018}
\subjclass{Primary 34A99; Secondary 46N10}
\keywords{Self-contracted curves, Gradient flow equation, Convex analysis, Convex optimisation, Convex extension, Analysis in metric spaces}
\begin{document}

\title[Self-contracted curves]{Self-contracted curves are gradient flows of convex functions}

\maketitle

\begin{abstract}In this paper we prove that any $C^{1,\alpha}$ curve in $\R^n$, with $\alpha \in (\frac{1}{2},1]$, is the solution of the gradient flow equation for some $C^1$ convex function $f$, if and only if it is strongly self-contracted. 
\end{abstract}

\keywords 

\tableofcontents

\section{Introduction}

In this paper we are concerned with metric properties of curves that are solutions of the gradient equation of a convex function, i.e. solutions to 
\begin{eqnarray}
\gamma'(t)=-\nabla f(\gamma(t)) \label{equation}
\end{eqnarray}
for some $f:\R^n\to \R$, convex. In  \cite{DLS} it was noticed that  such curves satisfy a certain nice property called \emph{self-contractedness}, of purely metric nature: a curve $\gamma:[0,T]\to \R^n$ is self-contracted if, 
$$\|\gamma(t_2)-\gamma(t_3)\|\leq \|\gamma(t_1)-\gamma(t_3)\| \quad \forall  \,0\leq t_1 < t_2 < t_3 \leq T.$$

Here $T$ can be $+\infty$. One of the central questions in the subject that has been recently studied is to know whether self-contracted curves, if bounded, have finite length. 

This question has been positively answered in the Euclidean setting \cite{MP, DLS, DDDL} and in several non-Euclidean contexts as well \cite{scR,step,lem,O}.

In this paper we focus on the converse problem: given a self-contracted curve $\gamma$ in $\R^n$, can we find a convex function $f$ for which $\gamma$ is the solution of the gradient flow equation \eqref{equation}?

We easily see different obstructions for this to be true in its full generality. For instance, if $\gamma$ is a solution for \eqref{equation} with $f$ convex, then any reparameterization $\tilde \gamma$  of $\gamma$ would solve a different equation of the type $\tilde{\gamma}'(t)=-  m(t)\nabla f(\tilde \gamma(t))$, while $\tilde{\gamma}$ remains self-contracted. Thus, the real question is whether one can find a suitable parameterization of the given self-contracted curve, which solves \eqref{equation}.

A second remark is that any solution $\gamma$ of equation \eqref{equation} actually satisfies a more restrictive condition than self-contractedness, called \emph{strongly} self-contractedness. Indeed, for a differentiable curve, one can derive an equivalent  condition to self-contractedness by saying that the vector $-\gamma'(t)$ lies in the normal cone of the convex hull of $\gamma([t,T])$ at the point $\gamma(t)$, for any $t$. Or equivalently,
$$\langle \gamma'(t), \gamma(s)-\gamma(t)\rangle\geq 0 \quad \forall \,0\leq  t< s \leq T. $$
\begin{defn}\em
A differentiable curve $\gamma:[0,T]\rightarrow
{\mathbb{R}}^{n}$ is {\em strongly self-contracted} if for every $t,s\in [0,T]$ with $t<s$ and $\gamma'(t)\not =0$ we have that
$$
\big\langle\gamma^{\prime}(t),\gamma(s)-\gamma(t)\big\rangle>0.
$$ 
\end{defn}

As we will explain, there is no chance for a general self-contracted curve to be the solution of \eqref{equation}, unless it is \emph{strongly self-contracted}.  Furthermore, for $C^{1,\alpha}$ curves (for $\alpha \in (\frac{1}{2},1]$) the converse is also true. More precisely, here is our main result.

\begin{thm}\label{theorem} 
Let  $\gamma:[0,L] \to \R^n$ be an arc-length parameterized curve in $C^{1,\alpha}([0,L])$, with $\alpha \in (\frac{1}{2},1]$.   Then $\gamma$ is strongly self-contracted if and only if  there exist a parameterization $\tilde\gamma:[0,T]\to \R^n$  of $\gamma$ with $T<+\infty$, and a $C^1$ convex 
function $f:\R^n\to \R$, such that $$\tilde{\gamma}'(t)=-\nabla f(\tilde \gamma(t)).$$
 If $\gamma$ is  moreover $C^{3}([0,L])$  then we can find some $f$ as above 
 attaining its minimum at $\gamma(L)$ and $T=+\infty$ in this case.
\end{thm}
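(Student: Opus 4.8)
The easy implication is immediate. If $\tilde\gamma'=-\nabla f\circ\tilde\gamma$ with $f$ convex and of class $C^1$, then $\frac{d}{dt}f(\tilde\gamma(t))=-\|\nabla f(\tilde\gamma(t))\|^2$, so $f\circ\tilde\gamma$ is non-increasing, and combining this with the subgradient inequality of $f$ at $\tilde\gamma(t)$ gives, for $t<s$,
$$\langle\tilde\gamma'(t),\tilde\gamma(s)-\tilde\gamma(t)\rangle=-\langle\nabla f(\tilde\gamma(t)),\tilde\gamma(s)-\tilde\gamma(t)\rangle\ \ge\ f(\tilde\gamma(t))-f(\tilde\gamma(s))=\int_t^s\|\nabla f(\tilde\gamma(u))\|^2\,du\ \ge\ 0,$$
and the inequality is strict as soon as $\nabla f(\tilde\gamma(t))\neq 0$, since the integrand is then positive near $u=t$. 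Because strong self-contractedness only records the direction of the velocity, it passes to the $C^1$ reparameterization relating $\gamma$ and $\tilde\gamma$, so $\gamma$ is strongly self-contracted.

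For the converse, set $\Phi(a,b):=\langle\gamma'(a),\gamma(b)-\gamma(a)\rangle=\int_a^b\langle\gamma'(a),\gamma'(s)\rangle\,ds$ for $0\le a\le b\le L$. The plan is to prescribe along $\Gamma:=\gamma([0,L])$ the convex $1$-jet $f(\gamma(\ell))=-\int_0^\ell g$, $\nabla f(\gamma(\ell))=-g(\ell)\gamma'(\ell)$, for a suitable weight $g:[0,L]\to(0,\infty)$, then to extend it to a $C^1$ convex function on $\R^n$, and finally to recover the parameterization by integrating the autonomous equation $\sigma'=g(\sigma)$. The only obstruction to a convex extension of such a jet is $f(\gamma(b))\ge f(\gamma(a))+\langle\nabla f(\gamma(a)),\gamma(b)-\gamma(a)\rangle$ for all $a,b$, i.e. $\int_a^b g\le g(a)\Phi(a,b)$ for $a<b$ together with the reversed inequality for $b<a$; the reversed one holds automatically once $g$ is strictly decreasing, because then $\int_b^a g\ge g(a)(a-b)\ge g(a)\langle\gamma'(a),\gamma(a)-\gamma(b)\rangle$, using $\langle\gamma'(a),\gamma'(s)\rangle\le 1$. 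So everything reduces to the forward inequality $\int_a^b g\le g(a)\Phi(a,b)$.

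To handle it I would use two facts. First, from $C^{1,\alpha}$: $|\gamma'(a)-\gamma'(s)|\le[\gamma']_\alpha\,|a-s|^\alpha$, hence, writing $d=b-a$ and $\langle\gamma'(a),\gamma'(s)\rangle=1-\tfrac12|\gamma'(a)-\gamma'(s)|^2$,
$$d\ \ge\ \Phi(a,b)\ \ge\ d-\frac{[\gamma']_\alpha^2}{2(2\alpha+1)}\,d^{\,2\alpha+1}.$$
Second, from strong self-contractedness: $\Phi(a,b)>0$ for $a<b$, and by continuity of $\Phi$ and compactness of $\Gamma$, $\Phi(a,b)\ge c_\delta>0$ whenever $b-a\ge\delta$. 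A natural choice is $g(s)=e^{-\lambda s}$ with $\lambda$ large, for which the forward inequality becomes $\frac{1-e^{-\lambda d}}{\lambda}\le\Phi(a,b)$. For $\lambda d$ small one compares the expansion $\frac{1-e^{-\lambda d}}{\lambda}\le d-\tfrac{\lambda d^2}{2}+\tfrac{\lambda^2 d^3}{6}$ with $\Phi(a,b)\ge d-c\,d^{\,2\alpha+1}$, and the first-order gain $\tfrac{\lambda d^2}{2}$ absorbs both the cubic tail and the turning error $c\,d^{\,2\alpha+1}$ precisely because $2\alpha+1>2$, i.e. $\alpha>\tfrac12$; for $\lambda d$ not small one uses $\frac{1-e^{-\lambda d}}{\lambda}<\tfrac1\lambda$ against $\Phi(a,b)\ge d/2$ in the intermediate range and against $\Phi(a,b)\ge c_\delta$ for $d$ bounded below, taking $\lambda$ large enough in terms of $[\gamma']_\alpha$, $L$ and the constants $c_\delta$. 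Choosing $\lambda$ strictly above all thresholds makes every inequality strict for $a\neq b$. This verification is the crux of the proof and the one place where $\alpha>\tfrac12$ is genuinely used; reconciling the near-diagonal regime (where $g$ must decay fast enough to dominate the turning of $\gamma$) with the far regime (where $g$ must already have decayed by a definite factor), while keeping $g$ positive and decreasing, is, I expect, the main technical difficulty.

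With a valid jet in hand, a Whitney-type $C^1$ convex extension theorem produces a $C^1$ convex $f:\R^n\to\R$ with $f|_\Gamma$ and $\nabla f|_\Gamma$ as prescribed: the compatibility inequality is exactly the hypothesis, the strictness rules out the ``corner'' degeneracy and forces $\partial f(\gamma(\ell))=\{-g(\ell)\gamma'(\ell)\}$, and $\nabla f|_\Gamma=-g\,\gamma'$ is continuous (indeed $C^{0,\alpha}$). Now solve $\sigma'=g(\sigma)$, $\sigma(0)=0$; since $g>0$ on $[0,L]$ the parameter reaches $\ell=L$ at the finite time $T=\int_0^L ds/g(s)<+\infty$, the curve $\tilde\gamma:=\gamma\circ\sigma$ is $C^1$ on $[0,T]$, and $\tilde\gamma'=\sigma'\,(\gamma'\circ\sigma)=g(\sigma)\,(\gamma'\circ\sigma)=-\nabla f(\tilde\gamma)$, as required. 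Finally, if $\gamma$ is $C^3$, the arc-length identity $\langle\gamma',\gamma''\rangle\equiv 0$ upgrades the near-diagonal bound to $|\Phi(a,b)-(b-a)|\le c\,(b-a)^3$, which leaves enough room to rerun the scheme with a weight that is positive on $[0,L)$ and vanishes linearly at $L$, e.g. $g(s)=e^{-\lambda s}-e^{-\lambda L}$ (re-checking the forward inequality near $b=L$ via the cubic estimate); then $\nabla f(\gamma(L))=0$, so the extension $f$ attains its minimum at $\gamma(L)$, and since $g(s)\sim\lambda e^{-\lambda L}(L-s)$ as $s\to L$ the integral $\int_0^L ds/g(s)$ diverges, giving $T=+\infty$.
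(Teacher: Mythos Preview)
Your approach is essentially the paper's: both reduce to a single scalar inequality (the paper's $(M)$-inequality is your forward inequality with $m=1/g$), both invoke the Azagra--Mudarra $C^1$ convex extension theorem for the construction, both take $g(s)=e^{-\lambda s}$ in the $C^{1,\alpha}$ case, and both obtain the near-diagonal bound from the identity $\langle\gamma'(a),\gamma'(s)\rangle=1-\tfrac12|\gamma'(a)-\gamma'(s)|^2$. The paper tightens your three-regime verification by first proving that on a compact interval strong self-contractedness is automatically \emph{uniform}, i.e.\ $\Phi(a,b)\ge c_0(b-a)$ for a single $c_0>0$; this collapses your ``intermediate'' and ``$d$ bounded below'' cases into one and avoids juggling the family of constants $c_\delta$.

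For the last assertion your weight differs from the paper's and is in fact simpler. The paper takes $m(t)=\tfrac{1}{b(L-t)}\exp\!\big(b(Lt-\tfrac{t^2}{2})\big)$ and genuinely needs the cubic Taylor bound in the verification. Your choice $g(s)=e^{-\lambda s}-e^{-\lambda L}$ has a virtue you did not exploit: since $\Phi(a,b)\le b-a$, subtracting the constant $e^{-\lambda L}$ only \emph{helps} the forward inequality, because
\[
\int_a^b g \;=\;\int_a^b e^{-\lambda\cdot}\;-\;e^{-\lambda L}(b-a)
\quad\text{and}\quad
g(a)\,\Phi(a,b)\;=\;e^{-\lambda a}\Phi(a,b)\;-\;e^{-\lambda L}\Phi(a,b),
\]
so $\int_a^b g<g(a)\Phi(a,b)$ follows from $\int_a^b e^{-\lambda\cdot}<e^{-\lambda a}\Phi(a,b)$ together with the nonnegative term $e^{-\lambda L}\big[(b-a)-\Phi(a,b)\big]$. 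Thus the ``re-check near $b=L$ via the cubic estimate'' that you defer is actually automatic from the $C^{1,\alpha}$ case already verified, and your argument would deliver the minimum-at-the-tip conclusion without invoking $C^3$ at all.
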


 Theorem \ref{theorem} follows from different intermediate statements  that we preferred to keep separated in the paper. To be more precise, the first part of the statement of Theorem \ref{theorem} comes from gathering together Lemma \ref{keyLemma1} and Lemma \ref{reductionLemma}, while the last part of the statement is a corollary of Proposition \ref{mainTh2}.

 Of course, as it was already known that solutions to the equation \eqref{equation} with $f$ convex are self-contracted curves, the most interesting part of our  result is the converse, namely, that given a self-contracted curve we can reconstruct the convex gradient flow. We shall give later some ideas about the proof.

It is worth to point out that we need some $C^{1,\alpha}$ or $C^{3}$ regularity assumption on the curve $\gamma$, while  the convex function  $f$ is only assumed to belong to $C^1$. On the other hand, it is possible to state a result in the class of $C^1$ curves if we assume a further technical condition on  the quantity $\big\langle\gamma^{\prime}(t),\gamma(s)-\gamma(t)\big\rangle$  (see Remark \ref{reguassumptions}), which is satisfied for regular enough curves. We shall give later in Section \ref{explanation} some more explanations about where exactly our regularity assumptions $C^{1,\alpha}$ or $C^{3}$ play a role.

Also, there exist some logarithmic spirals that are strongly self-contracted (see \cite{MP} and Remark \ref{spirals}). It  would  therefore be  interesting to obtain a similar result for $C^1([0,L))$ arc-length parameterized curves, instead of $C^1([0,L])$, in order to include strongly self-contracted spirals. This would be an alternative way of proving the same result as in  \cite[7.2]{DLS}, where the authors exhibit the construction of a convex function for which orbits to the associated gradient flow can spiral and, as a byproduct, prove that the so-called Thom conjecture fails for convex functions (see also Remark \ref{spirals} for further details).

Our  main result  has also some applications to convex foliation orbits. 
Recall that a direct consequence of equation \eqref{equation} is that any $C^1$ solution is transversal to the level sets of $f$, in the sense that $-\gamma'(t)$ belongs to the normal cone at point $\gamma(t)$ of $\{f=f(\gamma(t))\}$, since $\nabla f(x)$ has this property. This induces the following two definitions.

\begin{defn}[Convex foliation] \em 
A collection $\{C_r\}_{r\geq 0}$ of nonempty convex compact subsets of $\R^n$
 is a {\em (global) convex foliation} of $\R^n$ if
 $$
 r_1<r_2 \Rightarrow C_{r_1}\subset\Int C_{r_2},
 $$
 $$\bigcup_{r\geq 0} \partial C_r = \R^n\setminus \Int C_0.
 $$
 Here $\mathrm{Int}(C)$ denotes the \em interior \em of a set $C$ and $\partial C$ its boundary.
 \end{defn}

\begin{defn}[Orbit of a convex foliation]\em 
We say that  $\gamma \in C^1([0,T] ; \R^n)$ is an {\em orbit of the convex foliation} $\{C_r\}_{r\geq 0}$ if the following holds
\begin{itemize}
\item   $\gamma([0,T])\cap \Int C_0=\emptyset$. 
\item  Denoting by $r(t)$ the unique number such that $\gamma(t)\in \partial C_{r(t)}$ we have that $t\to r(t)$ is decreasing on $[0,T]$.
\item $-\gamma'(t)$  belongs to the normal cone of $C_{r(t)}$ at point $\gamma(t)$ for all $t\in [0,L].$
 \end{itemize}
 \end{defn}

The connection between self-contracted curves and orbits of a convex foliation is given by the following result.

\begin{thm}{\rm \cite[Corollary 4.11]{DDDL}} \label{thmAris}Every $C^1$ convex foliation orbit with no stationary points is a strongly self-contracted curve.
\end{thm}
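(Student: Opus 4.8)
The plan is to unwind the definition of the normal cone, combine it with the ``no stationary points'' hypothesis to force the radial function $r(t)$ to be \emph{strictly} decreasing, and then exploit the strictness of the nesting in the definition of a convex foliation.

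First I would record the basic reformulation. Saying that $-\gamma'(t)$ lies in the normal cone of $C_{r(t)}$ at $\gamma(t)$ is exactly
$$\langle \gamma'(t), y - \gamma(t)\rangle \geq 0 \qquad \text{for every } y \in C_{r(t)}.$$
Since $r$ is non-increasing, for $t<s$ one has $\gamma(s) \in C_{r(s)} \subseteq C_{r(t)}$, so $\langle \gamma'(t), \gamma(s) - \gamma(t)\rangle \geq 0$; this already gives the non-strict form, and the whole point is to upgrade it to a strict inequality.

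The key observation is that the absence of stationary points forces $r$ to be strictly decreasing. Indeed, suppose $r(t) = r(s) =: r_0$ for some $t<s$; then monotonicity gives $r \equiv r_0$ on $[t,s]$, so $\gamma([t,s]) \subseteq \partial C_{r_0}$, and applying the reformulation at a generic $u\in[t,s]$ with $y = \gamma(t) \in C_{r_0}$ yields $\langle \gamma'(u), \gamma(u) - \gamma(t)\rangle \leq 0$, i.e. $u \mapsto \|\gamma(u)-\gamma(t)\|^2$ is non-increasing on $[t,s]$. As it vanishes at $u=t$, it vanishes identically on $[t,s]$, whence $\gamma' \equiv 0$ there, contradicting the hypothesis. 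Consequently, for every $t<s$ we get $r(s) < r(t)$, and therefore $\gamma(s) \in C_{r(s)} \subseteq \Int C_{r(t)}$ by the strict nesting $r_1<r_2 \Rightarrow C_{r_1}\subset \Int C_{r_2}$.

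It then remains to see that a nonzero outer normal at a boundary point pairs strictly negatively with (interior point $-$ boundary point). Since $\gamma$ has no stationary points, $-\gamma'(t)\neq 0$, so $\{x : \langle \gamma'(t), x - \gamma(t)\rangle = 0\}$ genuinely supports $C_{r(t)}$; if $\langle \gamma'(t), \gamma(s) - \gamma(t)\rangle$ were zero, then $\gamma(s)$ would lie on this supporting hyperplane while also being interior to $C_{r(t)}$, and a small push from $\gamma(s)$ in the direction $-\gamma'(t)$ would stay inside $C_{r(t)}$ yet violate the reformulation above. Hence $\langle \gamma'(t), \gamma(s) - \gamma(t)\rangle > 0$ for all $t<s$, which is precisely strong self-contractedness. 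The only slightly delicate point of the argument is the middle step --- recognizing that ``no stationary points'' is exactly what prevents the orbit from lingering on a single leaf $\partial C_{r_0}$ --- after which strictness of the foliation does the rest.
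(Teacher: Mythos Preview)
Your argument is correct. Note, however, that the paper does not actually give its own proof of this statement: Theorem~\ref{thmAris} is simply quoted from \cite[Corollary~4.11]{DDDL}, so there is no in-paper proof to compare against. What you have written is a clean self-contained proof of the cited result.

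A couple of minor remarks. First, the definition of a convex foliation orbit in the paper already asks that $t\mapsto r(t)$ be ``decreasing''; if this is read as \emph{strictly} decreasing, your middle step (showing that the orbit cannot linger on a single leaf) is superfluous, though still correct. If ``decreasing'' is read as non-increasing, then that step is genuinely needed, and your derivation of $\gamma'\equiv 0$ on the plateau via the monotonicity of $u\mapsto\|\gamma(u)-\gamma(t)\|^2$ is the right way to extract a contradiction from the no-stationary-points hypothesis. Second, your final step---that a nonzero normal at a boundary point must pair strictly positively with (interior point $-$ boundary point)---is the standard separation fact, and your perturbation argument is a valid way to see it.
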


Since any solution to \eqref{equation} with $f$ being convex is, in particular, the orbit of the convex foliation given by the level sets of $f$ (our function $f$ will be indeed a proper  convex function), as a byproduct of our main result we get the converse statement of Theorem \ref{thmAris} in the $C^{1,\alpha}$ class.

\begin{cor} 
Any $C^{1,\alpha}$ strongly self-contracted curve with $\alpha>1/2$ is a convex foliation orbit.
\end{cor}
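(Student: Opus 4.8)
The plan is to deduce the corollary from Theorem~\ref{theorem} together with the elementary fact, recalled above, that the gradient flow of a convex function is an orbit of the convex foliation formed by its sublevel sets; the only additional ingredient needed is that being a convex foliation orbit is invariant under reparameterization. Let $\gamma$ be a $C^{1,\alpha}$ strongly self-contracted curve with $\alpha>1/2$; reparameterizing by arc length (legitimate for a curve with no stationary points, and preserving the $C^{1,\alpha}$ regularity since $|\gamma'|$ is then a positive $C^{0,\alpha}$ function on a compact interval) we may assume $\gamma\in C^{1,\alpha}([0,L])$ is arc-length parameterized. Theorem~\ref{theorem} then provides a reparameterization $\tilde\gamma\colon[0,T]\to\R^n$ of $\gamma$, with $T<+\infty$, and a $C^1$ convex function $f\colon\R^n\to\R$ with $\tilde\gamma'(t)=-\nabla f(\tilde\gamma(t))$; as remarked just before the corollary, the $f$ furnished by the proof is proper, hence has compact sublevel sets. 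Set $C_r:=\{x\in\R^n:f(x)\le\min f+r\}$ for $r\ge0$. These are nonempty, compact and convex, and from convexity of $f$ together with the fact that $\min f$ is the global minimum one checks directly that $C_{r_1}\subset\Int C_{r_2}=\{f<\min f+r_2\}$ whenever $r_1<r_2$, and that $\bigcup_{r\ge0}\partial C_r=\R^n\setminus\Int C_0$. Thus $\{C_r\}_{r\ge0}$ is a convex foliation of $\R^n$.

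Next, $\tilde\gamma$ is an orbit of $\{C_r\}_{r\ge0}$: since $\nabla f(\tilde\gamma(t))=-\tilde\gamma'(t)\neq0$ by strong self-contractedness, $\tilde\gamma(t)$ is never a minimizer of $f$, so $\tilde\gamma([0,T])\cap\Int C_0=\emptyset$ and the number $r(t):=f(\tilde\gamma(t))-\min f$, for which $\tilde\gamma(t)\in\partial C_{r(t)}$, is well defined; it is strictly decreasing because $r'(t)=\langle\nabla f(\tilde\gamma(t)),\tilde\gamma'(t)\rangle=-\|\nabla f(\tilde\gamma(t))\|^2<0$; and $-\tilde\gamma'(t)=\nabla f(\tilde\gamma(t))$ belongs to the normal cone of $C_{r(t)}=\{f\le f(\tilde\gamma(t))\}$ at $\tilde\gamma(t)$, which is the classical relation between the gradient of a convex function and its sublevel sets. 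Finally, each of the three conditions defining a convex foliation orbit is preserved under an orientation-preserving $C^1$ reparameterization of the curve: the image is unchanged, the function $r(\cdot)$ gets precomposed with an increasing map hence stays decreasing, and $\gamma'$ is merely multiplied by a positive factor while the normal cone is a cone. Therefore the original curve $\gamma$ is a convex foliation orbit.

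The one point that is not entirely routine is verifying that $\{C_r\}_{r\ge0}$ is genuinely a convex foliation, which relies on the properness (equivalently, coercivity) of the function $f$ delivered by the proof of Theorem~\ref{theorem}: one must make sure the construction there produces a coercive $f$ and not merely a convex function of class $C^1$. Granting this, the remainder is the soft transfer of the gradient-flow structure to the foliation structure, together with the reparameterization-invariance of the latter.
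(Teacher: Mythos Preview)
Your argument is correct and follows exactly the route the paper intends: apply Theorem~\ref{theorem} to obtain a $C^1$ convex $f$ whose gradient flow traces the curve, then read off the convex foliation from the sublevel sets of $f$; the paper states this corollary as an immediate byproduct without giving a proof, and your write-up is the natural detailed verification. Your flagging of the coercivity of $f$ as the only non-automatic point matches the paper's own parenthetical remark that ``our function $f$ will be indeed a proper convex function''; this can always be arranged a posteriori by adding to the Azagra--Mudarra extension a smooth nonnegative convex coercive function vanishing, together with its gradient, on a ball containing $\gamma([0,L])$.
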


But we also know  more: a consequence of our main result is that any $C^{1,\alpha}$ convex foliation orbit (with $\alpha>1/2$) is actually the gradient flow of some convex function, even if the convex foliation given by the level sets of this new convex function may not coincide with the original convex foliation. To emphasize the curiosity of the latter, let us give a last statement which directly follows from gathering together Theorem~\ref{thmAris} and \cite[Example 4.2 (ii)]{DDDL} with our main result, and which says that any  gradient flow associated to a quasiconvex function, is the gradient flow of some (other) convex function, up to reparameterization.

\begin{cor} Let $\alpha>1/2$ and $\gamma \in C^{1,\alpha}([0,T])$ be a curve which is a solution to \eqref{equation} with $f:\R^n\to \R$ a (coercive) quasiconvex function admitting a unique minimum.  Then there exists some $\tilde f:\R^n\to \R$ convex, and a new parameterization $\tilde \gamma$ of $\gamma$ such that $\tilde \gamma$ solves \eqref{equation} with $\tilde f$.
\end{cor}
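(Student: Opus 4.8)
The plan is to reduce the statement, via the two quoted results, to a situation covered by Theorem~\ref{theorem}: realize $\gamma$ as an orbit of the convex foliation formed by the sublevel sets of $f$, conclude strong self-contractedness from Theorem~\ref{thmAris}, pass to the arc-length parameterization while keeping track of $C^{1,\alpha}$ regularity, and finally apply Theorem~\ref{theorem} to obtain $\tilde f$ and $\tilde\gamma$.

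First I would dispose of degeneracies: if $\gamma'\equiv 0$ there is nothing to prove, and since the conclusion is unaffected by restricting to, and afterwards concatenating along, maximal subintervals, I may assume $\gamma'(t)=-\nabla f(\gamma(t))\neq 0$ for all $t\in[0,T]$. Let $m=\min_{\R^n}f$, attained at the unique point $x^\star$, so that $C_0:=\{f\le m\}=\{x^\star\}$ and $\Int C_0=\emptyset$. Since $f$ is coercive and quasiconvex, the sublevel sets $C_r:=\{f\le m+r\}$, $r\ge 0$, are nonempty, compact and convex, and by \cite[Example 4.2 (ii)]{DDDL} the nonstationary gradient-flow curve $\gamma$ is an orbit of the associated convex foliation. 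Indeed $\frac{d}{dt}f(\gamma(t))=\langle\nabla f(\gamma(t)),\gamma'(t)\rangle=-\|\nabla f(\gamma(t))\|^2<0$, so $t\mapsto r(t)$ defined by $\gamma(t)\in\partial C_{r(t)}$ (well defined because $\gamma(t)$, not being a critical point, does not lie in the interior of a level set of $f$) is strictly decreasing; $-\gamma'(t)=\nabla f(\gamma(t))$ lies in the normal cone of $C_{r(t)}$ at $\gamma(t)$; and $\gamma([0,T])\cap\Int C_0=\emptyset$ holds trivially. Hence $\gamma$ is a convex foliation orbit with no stationary points, and Theorem~\ref{thmAris} applies: $\gamma$ is strongly self-contracted.

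Next I would reparameterize by arc length. Let $s(t)=\int_0^t\|\gamma'(u)\|\,du$ and $\bar\gamma:[0,L]\to\R^n$, $L=s(T)<\infty$, the induced arc-length parameterization, $\bar\gamma=\gamma\circ t(\cdot)$ with $t=s^{-1}$. Because $\|\gamma'\|=\|\nabla f\circ\gamma\|$ is continuous, strictly positive, and of class $C^{0,\alpha}$ on the compact interval $[0,T]$, the map $s$ is a bi-Lipschitz $C^{1,\alpha}$-diffeomorphism; its inverse $t$ is Lipschitz with $t'=1/(\|\gamma'\|\circ t)\in C^{0,\alpha}$, so $\bar\gamma'(s)=\gamma'(t(s))/\|\gamma'(t(s))\|$ is a unit vector of class $C^{0,\alpha}$, i.e. $\bar\gamma\in C^{1,\alpha}([0,L])$. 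Strong self-contractedness is preserved, since for an orientation-preserving reparameterization the sign of $\langle\gamma'(t),\gamma(s)-\gamma(t)\rangle$ is unchanged when $\gamma'$ is rescaled by a positive function. Applying Theorem~\ref{theorem} to the $C^{1,\alpha}$ arc-length parameterized, strongly self-contracted curve $\bar\gamma$ produces a $C^1$ convex function $\tilde f:\R^n\to\R$ and a parameterization $\tilde\gamma$ of $\bar\gamma$ — hence of $\gamma$ — with $\tilde\gamma'(t)=-\nabla\tilde f(\tilde\gamma(t))$, which is exactly the assertion.

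The one point requiring genuine care is the regularity bookkeeping in the last step: one must check that excising the stationary part of $\gamma$ and reparameterizing by arc length really yields a curve of class $C^{1,\alpha}$ on a \emph{closed} interval $[0,L]$ so that Theorem~\ref{theorem} applies verbatim (this is why the hypothesis places us in $C^{1,\alpha}$ on a compact interval), and that on possible stationary pieces of $\gamma$ the statement is either vacuous or recovered by concatenating the pieces. One should also make sure the quasiconvexity-plus-unique-minimum hypothesis is invoked precisely where \cite[Example 4.2 (ii)]{DDDL} needs it, namely to guarantee compactness and suitable nesting of the sublevel sets of $f$. Everything else in the argument is a direct juxtaposition of \cite[Example 4.2 (ii)]{DDDL}, Theorem~\ref{thmAris} and Theorem~\ref{theorem}.
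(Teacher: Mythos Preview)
Your proof is correct and follows exactly the route the paper indicates: combine \cite[Example 4.2 (ii)]{DDDL} (the gradient curve is a convex foliation orbit), Theorem~\ref{thmAris} (hence strongly self-contracted), and Theorem~\ref{theorem} (hence a gradient flow of some convex $\tilde f$). You supply additional detail the paper leaves implicit, notably the verification that arc-length reparameterization preserves the $C^{1,\alpha}$ regularity on a closed interval and the handling of possible stationary points; these are welcome clarifications but do not change the strategy.
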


We now end this introduction by giving our strategy of proof for the main result (Theorem \ref{theorem}).

\subsection{Ideas of the proof} \label{explanation}To prove our main result, we start from a given self-contracted curve, and we need to construct a convex function for which a certain parameterization of that curve satisfies $\gamma'(t)=-\nabla f(\gamma(t))$. If this holds true, since $f(\gamma(t))'=\langle \nabla f(\gamma(t)),  \gamma'(t) \rangle= -\|\gamma'(t)\|^2$, we deduce that, assuming $f$ is zero at the tip, the restriction of $f$ to the curve must be given by
\begin{eqnarray}
f(\gamma(t))=\int_t^T \|\gamma'(\tau)\|^2 d\tau. \label{defFunct}
\end{eqnarray}
We are therefore left to extend the function $f$ given on the curve by this formula, to the whole space $\R^n$. More precisely, we need to find a suitable parameterization $\gamma$ for which this can be done. We also need that the gradient of this extended function, coincides with the vector $-\gamma'(t)$ on the curve.

To do so our main tool is a  recent Theorem by Azagra and Mudarra in \cite{AM}, which precisely gives an if and only if condition under which, given a function $f$ and a mapping $G$ on a given compact set $C$, one can extend $f$ in a $C^1$ convex manner to the whole space $\R^n$ with the property that $\nabla f=G$ on the original set $C$.  
 
Let us stress that there is also a $C^{1,1}$ version of this result  and in this case a simpler explicit formula for the convex extension can be obtained (see \cite{DLS0, AM2}). The $C^{1}$ case is more delicate and the proof relies on a non trivial variant of the Whitney extension theorem. Here is the precise statement.

 \begin{thm}{\rm \textbf{\cite[Theorem 1.8}]{AM}}\label{ThAM}
 Let $C$ be a compact (non necessarily convex) subset of $\R^n$. Let $f:C\to\R$ be an arbitrary function, and $G:C\to\R^n$ be a continuous mapping. Then $f$ has a convex, $C^1$ extension $F$ to all of $\R^n$, with $\nabla F=G$ on $C$, if and only if $(f,G)$ satisfy:
 \begin{itemize}
 \item $f(x)-f(y)\geq\langle G(y),x-y\rangle\,\, \text{for all}\,\,x,y\in C$.\quad $(C)$
 \item $f(x)-f(y)=\langle G(y),x-y\rangle\implies G(x)=G(y)\,\,\text{for all}\,\,x,y\in C$.\quad $(CW1)$
 \end{itemize}
 \end{thm}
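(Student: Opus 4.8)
\emph{Necessity.} For this direction I would simply use convexity: if $F$ is a $C^1$ convex extension of $f$ with $\nabla F=G$ on $C$, then $F(x)\ge F(y)+\langle\nabla F(y),x-y\rangle$ for all $x,y\in\R^n$, and restricting to $C$ gives $(C)$. For $(CW1)$ I would note that an equality $f(x)-f(y)=\langle G(y),x-y\rangle$ with $x,y\in C$ forces the nonnegative convex $C^1$ function $\Psi(z):=F(z)-F(y)-\langle\nabla F(y),z-y\rangle$ to vanish at $x$; since $x$ is then a global minimum of $\Psi$, we get $\nabla\Psi(x)=0$, i.e. $G(x)=\nabla F(x)=\nabla F(y)=G(y)$.

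\emph{Sufficiency.} Here the plan is as follows. First I would record that $(C)$, together with continuity of $G$ and compactness of $C$, makes $f$ continuous (hence bounded) on $C$: adding $(C)$ to itself with $x,y$ interchanged gives $\langle G(x),x-y\rangle\ge f(x)-f(y)\ge\langle G(y),x-y\rangle$, and both sides tend to $0$ as $x\to y$. Then I would take as candidate extension the convex envelope of the prescribed $1$-jet,
$$ m(x):=\sup_{y\in C}\big(f(y)+\langle G(y),x-y\rangle\big), $$
which is convex (a supremum of affine functions) and, by the previous remark, finite and continuous on $\R^n$. Condition $(C)$ immediately gives $m=f$ on $C$ and $G(y)\in\partial m(y)$ for every $y\in C$.

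The first thing to prove is that $m$ already carries the right $1$-jet on $C$, i.e. $m$ is differentiable at each $y_0\in C$ with $\nabla m(y_0)=G(y_0)$. I would argue this by picking, for a unit vector $u$, points $y_t\in C$ attaining the supremum defining $m(y_0+tu)$, noting $\tfrac{m(y_0+tu)-m(y_0)}{t}\le\langle G(y_t),u\rangle$ for $t>0$, and observing that every cluster point $\bar y$ of $y_t$ as $t\to0^+$ is ``active'' at $y_0$, meaning $f(y_0)-f(\bar y)=\langle G(\bar y),y_0-\bar y\rangle$, so that $(CW1)$ forces $G(\bar y)=G(y_0)$ and hence $G(y_t)\to G(y_0)$; combined with $G(y_0)\in\partial m(y_0)$ this pins the one-sided derivative of $m$ at $y_0$ to $\langle G(y_0),u\rangle$ in every direction $u$, giving Gateaux, hence Fréchet, differentiability. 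Finally, since a convex function differentiable at a point has gradient continuous there relative to its set of differentiability, $\nabla m$ extends continuously by $G$ up to $C$, uniformly on $C$ by compactness.

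The hard part will be the last step: upgrading $m$ to a function $F$ that is $C^1$ on all of $\R^n$, still convex, and still has $1$-jet $(f,G)$ on $C$. One cannot just mollify $m$, since that alters its values and its gradient on $C$, and one cannot glue local smooth convex approximations of $m$ on a Whitney family of balls exhausting $\R^n\setminus C$ (taken at a scale small compared to the distance to $C$) by a partition of unity, since that destroys convexity. This is precisely the ``non trivial variant of the Whitney extension theorem'' behind \cite{AM}: the correction near $C$ is controlled by the uniform continuity of $\nabla m$ up to $C$ established above, while the compatibility of the first-order data across overlapping pieces again relies on the active-set rigidity furnished by $(CW1)$. (In the $C^{1,1}$, or $C^{1,\omega}$, setting one sidesteps this by replacing the affine jet $f(y)+\langle G(y),x-y\rangle$ with a quadratically, resp. $\omega$-, perturbed jet and taking a convex envelope; no such quantitative modulus is available when $G$ is merely continuous, which is exactly why the $C^1$ case is genuinely more delicate.) Once $F$ is produced it is, by construction, a $C^1$ convex extension of $f$ with $\nabla F=G$ on $C$, and the theorem follows. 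Everything before this regularization is either elementary convexity or a routine compactness argument combined with $(CW1)$, so I expect that final gluing to be the only real obstacle.
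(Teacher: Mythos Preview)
This theorem is not proved in the present paper at all: it is quoted verbatim from \cite[Theorem~1.8]{AM} and used as a black-box tool in the proof of Lemma~\ref{reductionLemma}. So there is no ``paper's own proof'' to compare your proposal against.

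That said, your sketch is a reasonable outline of how the Azagra--Mudarra argument actually runs. The necessity direction is exactly right and complete. For sufficiency, your candidate $m(x)=\sup_{y\in C}(f(y)+\langle G(y),x-y\rangle)$ and the use of $(CW1)$ to show $\nabla m=G$ on $C$ are indeed the starting point in \cite{AM}. You correctly identify the genuine obstacle: $m$ is convex but typically not $C^1$ off $C$, and naive mollification or Whitney gluing destroys either convexity or the prescribed jet on $C$. You do not actually carry out this step; you only describe why it is hard and gesture at what controls the correction. That is an honest assessment of where the work lies, but it means your write-up is a proof \emph{plan} rather than a proof. The actual construction in \cite{AM} is substantial (a careful smoothing of $m$ away from $C$ that preserves convexity, with error terms governed by a modulus extracted from $(CW1)$ and compactness), and reproducing it would be well beyond what the present paper needs, which is precisely why the authors cite it rather than prove it.
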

 
Our main result then follows if we can find a good choice of parameterization $\gamma$ for which the function $f$ given by \eqref{defFunct} coupled with the choice $G( \gamma(t))=-\gamma'(t)$, satisfies conditions $(C)$ and $(CW1)$ of the above theorem. 

Passing trough the change of reparameterization, this is actually equivalent to find some $m:[0,L]\to \R$ such that the arc-length parameterization $\gamma$ satisfies the modified equation $\gamma'(t)=-m(t)\nabla f(\gamma(t))$. Thus reasoning now on the fixed arc-length parameterization $\gamma$, the function $G$ is now $G(\gamma(t))=-\frac{\gamma'(t)}{m(t)}$, and the function $f$ is now $f(\gamma(t))=\int_{t}^{L}\frac{1}{m(t)}d\tau$, and we seek for some function $m$ that would satisfy $(C)$ and $(CW1)$ for those choices of $f$ and $G$. Actually, the proof of $(CW1)$ is not a problem since it will be an empty condition in our case. Only $(C)$ has to be checked.

In the proof of  $(C)$ we have to consider two cases, when $x$ is before $y$ on the curve and vice-versa. One case  is easier than the other, does not need the self-contracted assumption and holds true for all reasonable choices of function $m$ (increasing and positive). The reverse implication is more delicate and, as proved in Section \ref{sectionProof}, reduces to find $m$ that satisfies the following crucial  family of inequalities, that we call $(M)$-inequality: 
$$(M)\hspace{2cm} m(t)\int_{t}^s \frac{1}{m(\tau)} d\tau < \langle \gamma'(t), \gamma(s)-\gamma(t)\rangle  \quad \quad \forall\, 0\leq t<s\leq L.$$
Remember that here, $\gamma$ is the arc-length parameterization, and $m$ is the unknown. The main point is that the right hand side is always strictly positive because the curve is strongly self-contracted, while the left hand side can be made as small as wanted, provided that the profile of $1/m$ decreases fast enough on the interval $(t,s)$. Thus the inequality is certainly easily satisfied  for all   $t<s$ fixed, and probably uniform for $s-t$ large enough by choosing well the function $m$, but then a problem arises at small scales, i.e. for $s-t$ small. Indeed, whatever is the choice of $m$, by the Lebesgue differentiation theorem we must have 
$$m(t)\left(\frac{1}{s-t}\int_t^s\frac{1}{m(\tau)} d\tau\right) \underset{s\to t^+}{\longrightarrow} 1,$$
and we also have that 
$$ \big\langle\gamma'(t), \frac{\gamma(s)-\gamma(t)}{s-t}\big\rangle \underset{s\to t^+}{\longrightarrow} \|\gamma'(t)\|^2=1.$$
Therefore, both terms in $(M)$ are $O(s-t)$ as $s\to t$ with both constant $1$, and a suitable choice of $m$ that satisfies $(M)$ has to take into account the higher order convergence of the above two quantities, so that the former always stays smaller than the latter.    Thus we shall use some Taylor expansions of second order to control those terms, and this is where the assumption $C^{1,\alpha}$ on the curve plays a role. The choice of $m$ is done in Section \ref{proofregular}. We find that, in the $C^{1,\alpha}$ class, an exponential reparameterization would suit, i.e. the choice $m(t)=e^{bt}$ for $b$ large enough works. 

Now if one wants $f$ to attain the minimum at the endpoint of the curve, the choice of $m$ is more involved since we need the extra condition that $\lim_{t\to L}m(t)=+\infty$ in order to have $\tilde \gamma'(t)\to 0$ as $t\to L$ as required if $\tilde \gamma'(t)=-\nabla f(\tilde \gamma(t))$ and $\nabla f=0$ at the endpoint. For that type of profile, the function $1/m$ is going to zero when tending to $L$ which makes the $(M)$-inequality more and more difficult to obtain as approaching to the tip. However, assuming some further regularity on the curve ($C^3$), we are still able to construct a suitable $m$ that works. This is done in the last section of the paper and completes the proof of our main Theorem.

 Finally, since the Theorem of Azagra and Mudarra is an if and only if, we deduce the if and only if condition in the first part   of Theorem \ref{theorem} as well. It is worth mentioning that this part of the proof only needs the curve to be $C^{1}$. This is done at the end of Section \ref{proofregular}.

\subsection{Acknowledgments}  This work was done while the first author was visiting Laboratoire Jacques-Louis Lions at Paris 7 supported by the Program ``Jos\'e Castillejo'' (grant CAS17/00060). We would like to thank the LJLL for its hospitality. We also would like to thank all the people from Paris 7 with whom we had occasion to discuss about self-contracted curves. In particular Matthieu Bonnivard and  Michael Goldman for their enthusiasm about the problem and useful ideas. We would also like to thank Aris Daniilidis for introducing us to this problem. The first author is supported by the grant MTM2015-65825-P (MINECO of Spain) and  the second by the research PGMO grant COCA from the Hadamard Foundation.

\section{Preliminary results  }

In this section we collect some technical lemmas which will be needed in the proof of Theorem \ref{theorem}.

\subsection{Reparameterizing lemma}

\begin{lem}[Compact case]\label{repar}
Let $\gamma:[0,L]\to \R^n$ be a $C^1$ curve such that $\gamma^{\prime}(t)=-m(t)\nabla f(\gamma(t))$ for all $t\in [0,L]$, where   $m:[0,L]\to\R^+$ is positive and continuous. Then, there exists a $C^1$ reparameterization $\widetilde{\gamma}$ of $\gamma$ such that $\widetilde{\gamma}^{\prime}(s)=-\nabla f(\widetilde{\gamma}(s))$ for all $s\in [0,T]$, with $T:=\int_0^Lm(s)ds$.
\end{lem}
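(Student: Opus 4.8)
The plan is to perform an explicit change of variables that absorbs the factor $m$. Given $\gamma:[0,L]\to\R^n$ with $\gamma'(t)=-m(t)\nabla f(\gamma(t))$, define $\varphi:[0,L]\to[0,T]$ by
\[
\varphi(t)=\int_0^t m(s)\,ds,
\]
so that $\varphi(0)=0$, $\varphi(L)=T$, and $\varphi'(t)=m(t)>0$. Since $m$ is positive and continuous, $\varphi$ is a $C^1$ strictly increasing bijection from $[0,L]$ onto $[0,T]$, hence admits a $C^1$ inverse $\psi=\varphi^{-1}:[0,T]\to[0,L]$ with $\psi'(s)=1/m(\psi(s))$ by the inverse function theorem. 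Then set $\widetilde\gamma:=\gamma\circ\psi$, which is $C^1$ as a composition of $C^1$ maps, has the same image as $\gamma$, and is therefore a reparameterization of $\gamma$.

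Next I would simply compute, using the chain rule, that for all $s\in[0,T]$,
\[
\widetilde\gamma'(s)=\psi'(s)\,\gamma'(\psi(s))=\frac{1}{m(\psi(s))}\bigl(-m(\psi(s))\nabla f(\gamma(\psi(s)))\bigr)=-\nabla f(\widetilde\gamma(s)),
\]
which is exactly the desired gradient flow equation. This establishes the claim with $T=\int_0^L m(s)\,ds$ as stated.

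There is essentially no obstacle here: the only points requiring a word of care are that $\varphi$ is genuinely a $C^1$ diffeomorphism (guaranteed by $m>0$ and $m$ continuous, so that $\varphi'$ never vanishes), and that $\widetilde\gamma$ is $C^1$ (needs $\psi\in C^1$, again from the inverse function theorem since $\varphi'\neq 0$ everywhere). I would also note in passing that no convexity or regularity of $f$ beyond $C^1$ is used, nor any self-contractedness — this is a purely elementary reparameterization statement, and the analogous argument with $T=+\infty$ works verbatim when $L$ is replaced by a half-open interval and $\int_0^L m(s)\,ds$ diverges, which is why the paper states it separately as the "compact case".
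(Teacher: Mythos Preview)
Your proof is correct and follows essentially the same approach as the paper: define the time change $\varphi(t)=\int_0^t m(s)\,ds$ (the paper calls it $\theta$), invert it, set $\widetilde\gamma=\gamma\circ\varphi^{-1}$, and compute via the chain rule. Your write-up is in fact slightly more careful than the paper's in spelling out why $\varphi$ is a $C^1$ diffeomorphism and why $\widetilde\gamma$ is $C^1$.
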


\begin{proof}
Let $T:=\int_0^Lm(s)ds$ and let  $\theta:[0,L]\to [0,T]$ be defined through
$$\theta(t)=\int_0^tm(s)ds.$$ 
We notice that $\theta$ is of class $C^1$ and stands for a strictly monotone function satisfying $\theta'(t)={m(t)}$. Now define $\widetilde{\gamma}:[0,T]\to\R^n$ with $\widetilde{\gamma}(s)=(\gamma\circ\theta^{-1})(s)=\gamma(\theta^{-1}(s))$. Observe that
$$
(\widetilde{\gamma}(s))'=((\gamma\circ\theta^{-1})(s))'=\gamma'(\theta^{-1}(s))(\theta^{-1})'(s)=\gamma'(\theta^{-1}(s))\frac{1}{\theta'((\theta^{-1})(s))}=\gamma'(\theta^{-1}(s))\frac{1}{m(\theta^{-1}(s))}
$$
and so
$$
(\widetilde{\gamma}(s))'=\gamma'(\theta^{-1}(s))\frac{1}{m(\theta^{-1}(s))}=-m(\theta^{-1}(s))\nabla f(\gamma(\theta^{-1}(s)))\frac{1}{m(\theta^{-1}(s))},
$$
which implies $(\widetilde{\gamma}(s))'=-\nabla f(\gamma(\theta^{-1}(s)))=-\nabla f(\widetilde{\gamma}(s))$.
 \end{proof}
 
 We shall also use the following variant. The proof is similar to Lemma \ref{repar} thus is left  to the reader.
 
 \begin{lem}
 \label{repar2}
Let $\gamma:[0,L]\to \R^n$ be a curve which belongs to $C^1([0,L))$, and  such that $\gamma^{\prime}(t)=-m(t)\nabla f(\gamma(t))$ for all $t\in [0,L)$, where  $m:[0,L) \to\R^+$ is positive, continuous, and $\displaystyle{\lim_{t\to L}m(t)=+\infty}$. Then, there exists a $C^1$ reparameterization $\widetilde{\gamma}:[0,\infty)\to\R^n$ of $\gamma$ such that $\widetilde{\gamma}^{\prime}(s)=-\nabla f(\widetilde{\gamma}(s))$ for all $s\in [0,+\infty)$. Moreover, $\displaystyle{\lim_{s\to +\infty} \widetilde{\gamma}^{\prime}(s)=0}$ thus $\gamma(L)=\tilde{\gamma}(\infty)$ is a critical point for $f$ (thus a minimum if $f$ is convex).
\end{lem}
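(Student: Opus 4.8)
The plan is to copy the proof of Lemma~\ref{repar} essentially verbatim, with two modifications: the time change now maps $[0,L)$ onto an unbounded interval, and one has to identify $\lim_{s\to+\infty}\widetilde\gamma'(s)$. First I would set $\theta(t)=\int_0^t m(s)\,ds$ for $t\in[0,L)$. Exactly as in Lemma~\ref{repar}, $\theta$ is $C^1$, strictly increasing with $\theta'=m>0$, and hence (inverse function theorem) admits a $C^1$ inverse $\theta^{-1}$ with $(\theta^{-1})'(s)=1/m(\theta^{-1}(s))$; its image is $[0,T)$ where $T:=\int_0^L m(s)\,ds$. The one point I would treat carefully — and which I regard as the only real obstacle — is that the conclusion ``$\widetilde\gamma$ is defined on $[0,+\infty)$'' requires $T=+\infty$. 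This is what the hypothesis $\lim_{t\to L}m(t)=+\infty$ is there to ensure; strictly speaking $\lim_{t\to L}m(t)=+\infty$ alone does not force $\int_0^L m=+\infty$ (take $m(t)=(L-t)^{-1/2}$), so in a fully rigorous account I would either add $\int_0^L m=+\infty$ to the hypotheses or verify it for the particular $m$ produced wherever the lemma is applied (Proposition~\ref{mainTh2}). Granting $T=+\infty$, $\theta$ is a $C^1$ bijection from $[0,L)$ onto $[0,+\infty)$ and $\theta^{-1}(s)\uparrow L$ as $s\to+\infty$.

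Next I would define $\widetilde\gamma:=\gamma\circ\theta^{-1}:[0,+\infty)\to\R^n$; it is $C^1$ as a composition of $C^1$ maps, and it is a reparameterization of $\gamma|_{[0,L)}$, with $\gamma(L)$ recovered as the limit at $+\infty$. The verification of the gradient flow equation is then identical to Lemma~\ref{repar}: by the chain rule $\widetilde\gamma'(s)=\gamma'(\theta^{-1}(s))\,(\theta^{-1})'(s)=\gamma'(\theta^{-1}(s))/m(\theta^{-1}(s))$, and substituting $\gamma'(t)=-m(t)\nabla f(\gamma(t))$ the two occurrences of $m(\theta^{-1}(s))$ cancel, giving $\widetilde\gamma'(s)=-\nabla f(\gamma(\theta^{-1}(s)))=-\nabla f(\widetilde\gamma(s))$ for every $s\in[0,+\infty)$.

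Finally, for the ``moreover'' part I would argue as follows. Since $\theta^{-1}(s)\to L$ as $s\to+\infty$ and $\gamma$ is continuous on the closed interval $[0,L]$, we have $\widetilde\gamma(s)=\gamma(\theta^{-1}(s))\to\gamma(L)$; and since $\nabla f$ is continuous, $\widetilde\gamma'(s)=-\nabla f(\widetilde\gamma(s))\to-\nabla f(\gamma(L))$. A $C^1$ trajectory that converges as $s\to+\infty$ must have limiting velocity zero: if $\widetilde\gamma'(s)\to v$ then $\widetilde\gamma(s+1)-\widetilde\gamma(s)=\int_s^{s+1}\widetilde\gamma'(\tau)\,d\tau\to v$, while the left-hand side tends to $0$ because $\widetilde\gamma(\cdot)$ has a limit; hence $v=0$. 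Therefore $\nabla f(\gamma(L))=0$ and $\lim_{s\to+\infty}\widetilde\gamma'(s)=0$, so $\gamma(L)=\widetilde\gamma(\infty)$ is a critical point of $f$, which is a global minimum when $f$ is convex. (In the situation actually used in the paper, where $\gamma$ is arc-length parameterized, this last point is even more direct, since then $\|\widetilde\gamma'(s)\|=1/m(\theta^{-1}(s))\to0$ immediately from $m(\theta^{-1}(s))\to+\infty$.)
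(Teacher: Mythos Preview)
Your proposal is correct and follows exactly the approach the paper intends: the paper omits the proof entirely, writing only that it ``is similar to Lemma~\ref{repar} thus is left to the reader,'' and your argument is precisely that adaptation. Your observation that $\lim_{t\to L}m(t)=+\infty$ does not by itself force $T=\int_0^L m=+\infty$ is a genuine gap in the lemma's hypotheses as stated; it is harmless in the paper's only application (Lemma~\ref{C2alpha}), where $m(t)=\frac{1}{b(L-t)}e^{b(Lt-t^2/2)}\sim \frac{e^{bL^2/2}}{b(L-t)}$ near $t=L$ is non-integrable, but you are right to flag it.
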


\subsection{A simple Taylor-type estimate}

In this paragraph we are interested in estimating from below the function
$$s\mapsto \langle \gamma'(t) , \gamma(s)-\gamma(t)\rangle,$$
in terms of $(s-t)$ for $s>t$. This will play a major role in our main result. We are particularly interested for small $(s-t)$. Let us begin with the following simple observation. If $\gamma:[0,L]\to \R^n$ is a $C^2$ arc-length parameterized curve, it is well known that $\langle \gamma'(t),\gamma''(t)\rangle =0$, because $\frac{d}{dt}\|\gamma'(t)\|^2=0$. As a consequence, the Taylor expansion
$$\gamma(s)-\gamma(t)=\gamma'(t)(s-t)+\frac{\gamma''(t)}{2}(s-t)^2+o((s-t)^2)$$
tells us that
$$\langle \gamma'(t), \gamma(s)-\gamma(t)\rangle= (s-t)+o((s-t)^2).$$
In other words  we ``win'' some order of $(s-t)$ in the scalar product since we directly arrive to $(s-t)^2$. The following statement says that a similar phenomenon happens for $C^{1,\alpha}$ curves.

\begin{lem} \label{TaylorGood} Let $\gamma\in C^{1,\alpha}([0,L],\R^n)$ be an arc-length parameterized curve. Then for all $0\leq t<s\leq L$ we have
$$\langle \gamma'(t), \gamma(s)-\gamma(t)\rangle \geq (s-t)-C(s-t)^{2\alpha+1}.$$
\end{lem}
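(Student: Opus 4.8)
The plan is to run the same Taylor-expansion argument as in the $C^2$ case, but keeping careful track of the error terms that the weaker $C^{1,\alpha}$ hypothesis allows. Write
$$\gamma(s)-\gamma(t)=\int_t^s \gamma'(\tau)\,d\tau,$$
so that
$$\langle \gamma'(t),\gamma(s)-\gamma(t)\rangle=\int_t^s \langle \gamma'(t),\gamma'(\tau)\rangle\,d\tau.$$
Now I would rewrite the integrand using the polarization-type identity $\langle \gamma'(t),\gamma'(\tau)\rangle=\tfrac12\big(\|\gamma'(t)\|^2+\|\gamma'(\tau)\|^2-\|\gamma'(\tau)-\gamma'(t)\|^2\big)$. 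Since $\gamma$ is arc-length parameterized, $\|\gamma'(t)\|=\|\gamma'(\tau)\|=1$, so the first two terms contribute exactly $1$, and we are left with
$$\langle \gamma'(t),\gamma(s)-\gamma(t)\rangle=(s-t)-\frac12\int_t^s \|\gamma'(\tau)-\gamma'(t)\|^2\,d\tau.$$
This is the key algebraic step: the arc-length normalization is exactly what makes the leading-order term cancel out cleanly and leaves only a manifestly controllable error.

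Next I would estimate the remaining integral using the $C^{1,\alpha}$ regularity. By definition of the $C^{1,\alpha}$ norm there is a constant $C_0$ (the H\"older seminorm of $\gamma'$) with $\|\gamma'(\tau)-\gamma'(t)\|\leq C_0 |\tau-t|^\alpha$ for all $\tau,t\in[0,L]$. Hence
$$\frac12\int_t^s \|\gamma'(\tau)-\gamma'(t)\|^2\,d\tau \leq \frac{C_0^2}{2}\int_t^s (\tau-t)^{2\alpha}\,d\tau=\frac{C_0^2}{2(2\alpha+1)}(s-t)^{2\alpha+1}.$$
Setting $C:=\frac{C_0^2}{2(2\alpha+1)}$ gives exactly
$$\langle \gamma'(t),\gamma(s)-\gamma(t)\rangle\geq (s-t)-C(s-t)^{2\alpha+1},$$
which is the claimed inequality. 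Note the argument works for every $\alpha\in(0,1]$, not just $\alpha>1/2$; the restriction $\alpha>1/2$ is presumably needed elsewhere (when this estimate is balanced against the $(M)$-inequality), not here.

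I do not expect any serious obstacle in this proof: the only thing to be careful about is the bookkeeping of constants and making sure the H\"older estimate is applied on the correct interval (here $\tau$ ranges between $t$ and $s$, so $|\tau-t|\leq s-t$ and the bound is uniform). An alternative, essentially equivalent, route would be to expand $\gamma(s)-\gamma(t)=\gamma'(t)(s-t)+R$ with $\|R\|\leq C_0(s-t)^{1+\alpha}$ by integrating the H\"older bound on $\gamma'$, and then write $\langle\gamma'(t),\gamma(s)-\gamma(t)\rangle=(s-t)+\langle\gamma'(t),R\rangle\geq (s-t)-C_0(s-t)^{1+\alpha}$; but this gives the weaker exponent $1+\alpha$ instead of $1+2\alpha$, so the polarization identity above is the better approach and is the one I would present. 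The ``winning of an order'' in the sharper estimate comes precisely from the cancellation $\|\gamma'(t)\|=\|\gamma'(\tau)\|=1$, exactly mirroring the $\langle\gamma'(t),\gamma''(t)\rangle=0$ phenomenon recalled just before the lemma.
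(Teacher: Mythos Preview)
Your proof is correct and is essentially identical to the paper's: both write $\langle\gamma'(t),\gamma(s)-\gamma(t)\rangle=\int_t^s\langle\gamma'(t),\gamma'(\tau)\rangle\,d\tau$, use the identity $\langle\gamma'(t),\gamma'(\tau)\rangle=1-\tfrac12\|\gamma'(t)-\gamma'(\tau)\|^2$ coming from the arc-length normalization, and then bound the remaining integral by the H\"older estimate to obtain the same constant $C=\frac{\|\gamma'\|_{C^{0,\alpha}}^2}{2(2\alpha+1)}$. Your additional remarks (that the argument works for all $\alpha\in(0,1]$ and that the naive Taylor bound only yields exponent $1+\alpha$) are correct and helpful context not made explicit in the paper.
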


\begin{proof} By differentiating
$$\frac{d}{ds}\langle \gamma'(t), \gamma(s)-\gamma(t) \rangle=\langle \gamma'(t),\gamma'(s)\rangle,$$
and we obtain for $t<s$, 
\begin{eqnarray}
\langle \gamma'(t), \gamma(s)-\gamma(t)\rangle&=&\int_t^s \langle \gamma'(t),\gamma'(\tau)\rangle d\tau .\notag 
\end{eqnarray}
Then we use the formula
$$\langle \gamma'(t),\gamma'(\tau) \rangle=1-\frac{\|\gamma'(t)-\gamma'(\tau)\|^2}{2},$$
to get 
\begin{eqnarray}
\langle \gamma'(t), \gamma(s)-\gamma(t)\rangle&=&(s-t)- \frac{1}{2}\int_t^s  \|\gamma'(t)-\gamma'(\tau)\|^2 d\tau \notag  \\
&\geq& (s-t)-C(s-t)^{2\alpha+1},\notag
\end{eqnarray}
with $C=\frac{1}{2(2\alpha+1)}\|\gamma'\|_{C^{0,\alpha}}^2$.
\end{proof}

\subsection{$C^1$ strongly self-contracted implies uniformly strongly self-contracted}

\begin{defn} \label{defUSC} \em 
An arc-length parameterized curve $\gamma:[0,L]\to \R^n$ is {\em uniformly strongly self-contracted} if there exists some $c_0>0$ such that 
$$\big\langle \gamma'(t), \frac{\gamma(s)-\gamma(t)}{s-t}\big\rangle \geq  c_0  \quad \quad \forall\, 0\leq t<s\leq L.$$
\end{defn}

\begin{lem}\label{uniformlyStrong} Let $\gamma \in C^1([0,L], \R^n)$ be a strongly self-contracted curve parameterized by arc-length. Then it is uniformly strongly self-contracted.
\end{lem}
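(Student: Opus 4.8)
The plan is to argue by contradiction using a compactness argument, exploiting the fact that $[0,L]$ is compact and $\gamma$ is $C^1$. Suppose the conclusion fails. Then for every $k\in\mathbb{N}$ there exist $t_k<s_k$ in $[0,L]$ with
$$\Big\langle \gamma'(t_k),\frac{\gamma(s_k)-\gamma(t_k)}{s_k-t_k}\Big\rangle < \frac1k.$$
By compactness of $[0,L]$ we may pass to a subsequence (not relabelled) so that $t_k\to t^\ast$ and $s_k\to s^\ast$ with $t^\ast\le s^\ast$. Since $\gamma$ is an arc-length parameterized $C^1$ curve, the difference quotient $\frac{\gamma(s)-\gamma(t)}{s-t}$ has norm at most $1$ for all $t<s$, so the left-hand side is always $\ge -1$; the issue is only that it might degenerate to $0$. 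I now split into two cases according to whether $s^\ast>t^\ast$ or $s^\ast=t^\ast$.

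If $s^\ast>t^\ast$, then passing to the limit and using continuity of $\gamma'$ and of the difference quotient (the denominator $s_k-t_k\to s^\ast-t^\ast>0$ stays bounded away from zero) we get
$$\Big\langle \gamma'(t^\ast),\frac{\gamma(s^\ast)-\gamma(t^\ast)}{s^\ast-t^\ast}\Big\rangle \le 0,$$
which contradicts strong self-contractedness applied to the pair $t^\ast<s^\ast$ (note $\gamma'(t^\ast)\neq 0$ since $\|\gamma'\|\equiv 1$). This case is easy. The main obstacle is the case $s^\ast=t^\ast$, where the difference quotient need not converge and strong self-contractedness gives only a strict inequality at each fixed pair, with no uniform lower bound a priori — this is exactly what we are trying to prove. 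To handle it I would use a mean-value / integral representation: write
$$\Big\langle \gamma'(t_k),\frac{\gamma(s_k)-\gamma(t_k)}{s_k-t_k}\Big\rangle = \frac{1}{s_k-t_k}\int_{t_k}^{s_k}\langle\gamma'(t_k),\gamma'(\tau)\rangle\,d\tau.$$
For $\tau\in[t_k,s_k]$ we have $\langle\gamma'(t_k),\gamma'(\tau)\rangle = 1-\tfrac12\|\gamma'(t_k)-\gamma'(\tau)\|^2$, and since $\gamma'$ is uniformly continuous on $[0,L]$, given $\varepsilon>0$ there is $\delta>0$ with $\|\gamma'(t_k)-\gamma'(\tau)\|<\varepsilon$ whenever $|\tau-t_k|<\delta$. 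Hence for $k$ large (so that $s_k-t_k<\delta$) the average above is $\ge 1-\tfrac{\varepsilon^2}{2}$, which for $\varepsilon<1$ is bounded below by a positive constant, contradicting that it tends to $0$.

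This argument in fact shows the cleaner statement directly, without contradiction: by uniform continuity of $\gamma'$ pick $\delta>0$ with $\|\gamma'(t)-\gamma'(\tau)\|^2\le 1$ for $|t-\tau|\le\delta$, so that for $0\le t<s\le L$ with $s-t\le\delta$ one has $\big\langle\gamma'(t),\tfrac{\gamma(s)-\gamma(t)}{s-t}\big\rangle\ge \tfrac12$. On the complementary compact set $K=\{(t,s): 0\le t<s\le L,\ s-t\ge\delta\}$, the function $(t,s)\mapsto \big\langle\gamma'(t),\tfrac{\gamma(s)-\gamma(t)}{s-t}\big\rangle$ is continuous and, by strong self-contractedness, strictly positive at every point (again using $\gamma'(t)\neq 0$), hence attains a positive minimum $c_1>0$ on $K$ by compactness. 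Taking $c_0=\min\{\tfrac12,c_1\}$ finishes the proof. I would present this second, direct version; the only subtlety worth flagging is that $K$ is indeed compact (it is closed and bounded, as $\delta>0$ keeps it away from the diagonal), and that the strict positivity on $K$ uses the hypothesis that $\gamma$ is strongly self-contracted on all of $[0,L]$ together with $\|\gamma'\|\equiv 1$.
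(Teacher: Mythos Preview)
Your proof is correct and follows essentially the same approach as the paper: a compactness/contradiction argument splitting into the cases $s^\ast>t^\ast$ and $s^\ast=t^\ast$. In the coalescing case the paper simply observes that $\frac{\gamma(s_k)-\gamma(t_k)}{s_k-t_k}\to\gamma'(t_0)$ and hence the inner product tends to $\|\gamma'(t_0)\|^2=1$; your integral representation plus uniform continuity of $\gamma'$ is exactly what justifies this convergence, so you have merely made the paper's step explicit. Your alternative direct argument (uniform continuity near the diagonal, compactness away from it) is a clean repackaging of the same ideas.
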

\begin{proof} We already know by strongly self-contractedness that 
$$\big \langle\gamma'(t), \gamma(s)-\gamma(t)\big \rangle>0 \quad \quad \forall\, 0\leq t<s\leq L. $$
Now we argue by contradiction. If the lemma is not true, then there exist some sequences $t_k$ and $s_k$ such that $t_k<s_k$ and 
$$\big \langle\gamma'(t_k), \frac{\gamma(s_k)-\gamma(t_k)}{s_k-t_k} \big \rangle\to 0.$$ 
By compactness of $[0,L]$ we may assume that $t_k\to t_0$ and $s_k\to s_0$ in $[0,L]$. Then if $t_0<s_0$ we get a contradiction because passing to the limit we get (notice that $\gamma$ is assumed to be $C^1$ thus $\gamma'$ is continuous), 
$$\langle\gamma'(t_k), \frac{\gamma(s_k)-\gamma(t_k)}{s_k-t_k}\rangle\to \langle\gamma'(t_0), \frac{\gamma(s_0)-\gamma(t_0)}{s_0-t_0}\rangle>0.$$
So the only possibility is that $t_0=s_0$. But then 
$$\frac{\gamma(s_k)-\gamma(t_k)}{s_k-t_k} \to \gamma'(t_0)$$
thus
$$\langle\gamma'(t_k), \frac{\gamma(s_k)-\gamma(t_k)}{s_k-t_k}\rangle \to \|\gamma'(t_0)\|^2=1\not = 0,$$
which is again a contradiction.
\end{proof}

\section{The key Lemma}
\label{sectionProof}

Our main results follow from the following Lemma. 

\begin{lem}[Key Lemma, compact case] \label{reductionLemma} 
Let $\gamma:[0,L]\to\R^n$ be an arc-length parameterized curve which belongs to $C^1([0,L])$. Let  $m:[0,L]\to \R^+_{*}$ be a continuous, increasing and positive function satisfying
 $$(M)\quad \quad \quad m(t)\int_{t}^s \frac{1}{m(\tau)}d\tau < \langle \gamma'(t) , \gamma(s)-\gamma(t)\rangle \quad \forall \,0\leq t<s\leq L. $$

Then there exists a $C^1$ convex function $f:\R^n\to \R$ such that the parameterization $\tilde\gamma:=\gamma\circ \theta^{-1}$, where $\theta(t)=\int_0^tm(s)ds$ satisfies $\tilde{\gamma}'(t)=-\nabla f(\tilde \gamma(t))$ for all $t\in [0,T]$, with $T:=\int_0^Lm(s)ds$.
\end{lem}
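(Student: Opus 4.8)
The plan is to apply the Azagra--Mudarra extension theorem (Theorem~\ref{ThAM}) to the compact set $C:=\gamma([0,L])$, the function $f$ defined on $C$ by $f(\gamma(t)):=\int_t^L \frac{1}{m(\tau)}\,d\tau$, and the field $G$ defined on $C$ by $G(\gamma(t)):=-\frac{\gamma'(t)}{m(t)}$. First I would check these are well defined: since $\gamma$ is injective (strong self-contractedness, which is forced here because $(M)$ implies $\langle\gamma'(t),\gamma(s)-\gamma(t)\rangle>0$, makes $\gamma$ injective) the assignments $\gamma(t)\mapsto f(\gamma(t))$ and $\gamma(t)\mapsto G(\gamma(t))$ are unambiguous, and $G$ is continuous because $\gamma\in C^1$ and $m$ is continuous and positive. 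If the extension theorem applies we obtain a $C^1$ convex $F:\R^n\to\R$ with $\nabla F=G$ on $C$; then along the arc-length curve $\gamma'(t)=-m(t)\nabla F(\gamma(t))$, and Lemma~\ref{repar} produces the reparameterization $\tilde\gamma=\gamma\circ\theta^{-1}$ with $\theta(t)=\int_0^t m(s)\,ds$ and $T=\int_0^L m(s)\,ds$ satisfying $\tilde\gamma'=-\nabla F(\tilde\gamma)$. So the whole task reduces to verifying conditions $(C)$ and $(CW1)$ for the pair $(f,G)$.

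Condition $(CW1)$ should be essentially vacuous: I expect to show that $f(x)-f(y)=\langle G(y),x-y\rangle$ with $x,y\in C$ forces $x=y$, so the implication holds trivially. Writing $x=\gamma(s)$, $y=\gamma(t)$, equality in $(C)$ at a pair with $t\neq s$ would contradict the strict inequality obtained while proving $(C)$; hence only $x=y$ remains and $(CW1)$ is automatic. The real work is $(C)$: for all $t,s\in[0,L]$,
$$ f(\gamma(s))-f(\gamma(t)) \;\geq\; \big\langle G(\gamma(t)),\,\gamma(s)-\gamma(t)\big\rangle \;=\; -\frac{1}{m(t)}\langle\gamma'(t),\gamma(s)-\gamma(t)\rangle. $$
I would split into the two cases $s>t$ and $s<t$. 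When $s<t$ we have $f(\gamma(s))-f(\gamma(t))=\int_s^t\frac{1}{m(\tau)}\,d\tau>0$ while the right-hand side, after flipping, is $\frac{1}{m(t)}\langle\gamma'(t),\gamma(t)-\gamma(s)\rangle$; here I would bound $\langle\gamma'(t),\gamma(t)-\gamma(s)\rangle=\int_s^t\langle\gamma'(t),\gamma'(\tau)\rangle\,d\tau\leq\int_s^t 1\,d\tau=t-s$ (Cauchy--Schwarz, arc-length), and since $m$ is increasing, $\frac{1}{m(t)}(t-s)\leq\int_s^t\frac{1}{m(\tau)}\,d\tau$, which is exactly what we need. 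This case uses neither $(M)$ nor self-contractedness, only that $m$ is positive and increasing. When $s>t$ the inequality to prove is $\int_t^s\frac{1}{m(\tau)}\,d\tau\geq -\frac{1}{m(t)}\langle\gamma'(t),\gamma(s)-\gamma(t)\rangle$; since the left side is positive and, by strong self-contractedness, the right side is negative, this is immediate.

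Wait --- that last case came out too easily, so let me reexamine where $(M)$ is actually needed. Rechecking the sign of $G$: if we want $\tilde\gamma'=-\nabla F$, then along $\gamma$ we need $\gamma'(t)=-m(t)\nabla F(\gamma(t))$, i.e.\ $\nabla F(\gamma(t))=-\gamma'(t)/m(t)=G(\gamma(t))$, and $f(\tilde\gamma(s))$ decreasing gives $f(\gamma(t))=\int_t^L\|\gamma'\|^2/m$-type expression; with arc-length, $f(\gamma(t))=\int_t^L\frac{1}{m(\tau)}\,d\tau$, so $f$ is \emph{decreasing} along $\gamma$. Then in case $s>t$, $f(\gamma(s))-f(\gamma(t))=-\int_t^s\frac{1}{m(\tau)}\,d\tau<0$, and the target inequality becomes $-\int_t^s\frac{1}{m(\tau)}\,d\tau\geq-\frac{1}{m(t)}\langle\gamma'(t),\gamma(s)-\gamma(t)\rangle$, i.e.\ $\frac{1}{m(t)}\langle\gamma'(t),\gamma(s)-\gamma(t)\rangle\geq\int_t^s\frac{1}{m(\tau)}\,d\tau$, which is precisely $(M)$ (after multiplying by $m(t)>0$). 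So $(M)$ is exactly the hypothesis that makes the hard direction of $(C)$ hold. The main obstacle in writing this cleanly is bookkeeping the signs and making sure the ``easy'' case ($s<t$) genuinely only needs monotonicity of $m$; I would also double-check that strictness in $(M)$ gives strictness in $(C)$ for $t\neq s$, which is what kills $(CW1)$. With $(C)$ and $(CW1)$ in hand, Theorem~\ref{ThAM} and Lemma~\ref{repar} finish the proof.
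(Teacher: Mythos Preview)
Your proposal is correct and follows essentially the same route as the paper: define $f(\gamma(t))=\int_t^L \frac{1}{m}$ and $G(\gamma(t))=-\gamma'(t)/m(t)$ on $C=\gamma([0,L])$, verify $(C)$ in two cases (the ``easy'' one via monotonicity of $m$ and Cauchy--Schwarz, the ``hard'' one being exactly $(M)$), observe that strictness makes $(CW1)$ vacuous, and then apply Theorem~\ref{ThAM} and Lemma~\ref{repar}. Your treatment of the easy case (integrating $\langle\gamma'(t),\gamma'(\tau)\rangle\le 1$) is a slight variant of the paper's chain through $\ell(\gamma|_{[t,s]})\ge\|\gamma(t)-\gamma(s)\|$, and your explicit remark that $(M)$ forces injectivity of $\gamma$ (hence well-definedness of $f,G$) is a point the paper leaves implicit.
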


\begin{proof} 
The idea of the proof is to apply Theorem \ref{ThAM} to the compact set $C=\gamma([0,L])$. In particular, $C$ is the support of some $C^1$ curve, of finite length $L$. 

The strategy is to prove that
\begin{equation} \label{GradEq2}
\gamma^{\prime}(t)=-m(t)\nabla f(\gamma(t)), \quad \quad \forall t \in [0,L],
\end{equation}
and then apply Lemma \ref{repar} to find a reparameterization $\widetilde{\gamma}$ of $\gamma$ such that \eqref{equation} is verified.
Notice that if $\gamma$  satisfies \eqref{GradEq2}, then 
$$
(f(\gamma(t)))'=\langle\nabla f(\gamma(t)), \gamma'(t)\rangle =\frac{-\|\gamma'(t)\|^2}{m(t)}=\frac{1}{m(t)}.
$$

As a consequence,  if we assume $f(\gamma(L))=0$, we can integrate the above identity which yields the following necessary expression of the function $f$ restricted to  the curve, 
\begin{equation}\label{deff}
f(\gamma(t))=\int_t^{L}\frac{1}{m(\tau)} d\tau.
\end{equation}
We then define
$$
G(\gamma(t))=-\dfrac{\gamma'(t)}{m(t)},
$$
which is by hypothesis continuous on $C$.

We want now to apply Theorem \ref{ThAM} with this choice of $f$ and $G$, defined on the curve. This would say that $f$ is the restriction of some $C^1$ convex function   and $\nabla f(\gamma(t))=G(\gamma(t))=-\dfrac{\gamma'(t)}{m(t)}$ so that \eqref{GradEq2} is verified. 

We are therefore reduced to check the conditions $(C)$ and $(CW1)$ of Theorem \ref{ThAM} with our choice of $f$ and $G$. We divide the problem into two parts, for $x =\gamma(t)$ and $y=\gamma(s)$ with $t<s$ and vice-versa. 

{\bf Step 1.} Condition $(C)$ and $(CW1)$ for $x=\gamma(t)$ and $y=\gamma(s)$, with $t<s$.

We write directly
\begin{equation*}
\begin{split}
f(\gamma(t))-f(\gamma(s))=&\int_t^{L}\frac{1}{m(\tau)}d\tau-\int_s^{L}\frac{1}{m(\tau)}d\tau=\int_t^{s}\frac{1}{m(\tau)}d\tau\\
\geq& \frac{1}{m(s)} \int_t^{s}d\tau=\frac{1}{m(s)}\ell(\gamma_{|[t,s]})\quad\quad (t\mapsto \frac{1}{m(t)} \text{ strictly decreasing})\\
\geq& \frac{1}{m(s)}\|\gamma(t)-\gamma(s)\|=\frac{1}{\|m(s)\|}\|\gamma(t)-\gamma(s)\| \quad \quad  \text{ (if $m(s)>0$)}\\
\geq & \langle \frac{-\gamma'(s)}{m(s)},\gamma(t)-\gamma(s)\rangle \quad \quad  \quad \quad \text{ (by Cauchy-Schwarz)},
\end{split}
\end{equation*}
which proves $(C)$. At this point, notice that equality never occurs due to the strict inequality on the second line, so that $(CW1)$ also always holds true.

{\bf Step 2.} As we always prefer to keep the notation $x=\gamma(t)$ and $y=\gamma(s)$, with $t<s$, we now check conditions $(C)$ and $(CW1)$ with $x$ replaced by $y$, which reads 
$$
f(y)-f(x)\geq\langle G(x),y-x\rangle.
$$
Thus condition $(C)$ reduces to find $m$ such that for any $t\in I$ and any $s>t$
\begin{eqnarray}
-\int_t^s\frac{1}{m(\tau)} d\tau \geq \left\langle  -\frac{\gamma'(t)}{m(t)}, \gamma(s)-\gamma(t)\right\rangle, \notag
\end{eqnarray}
or equivalently,
\begin{eqnarray}
m(t)\int_t^s\frac{1}{m(\tau)} d\tau \leq \langle  \gamma'(t), \gamma(s)-\gamma(t)\rangle \quad \forall\, t<s, \notag
\end{eqnarray}
and this follows from our assumption called the $(M)$-inequality thus $(C)$ holds true. Moreover, since by assumption the inequality is strict, condition $(CW1)$ is empty which finishes the proof.
\end{proof}

We shall also need the following simple variant of our key Lemma.

\begin{lem}\label{reductionLemma2} Let $\gamma:[0,L]\to\R^n$ be an arc-length 
parameterized curve which belongs to $C^1([0,L))$. Let $m:[0,L)\to \R^+_{*}$ be a continuous, increasing and positive function satisfying 
\begin{eqnarray}
\lim_{t\to L}m(t)=+\infty, \label{infinity}
\end{eqnarray}
and moreover
 $$(M)\quad \quad \quad m(t)\int_{t}^s \frac{1}{m(\tau)}d\tau < \langle \gamma'(t) , \gamma(s)-\gamma(t)\rangle \quad \forall \,0\leq t<s<L. $$

Then there exists a $C^1$ convex function $f:\R^n\to \R$ such that the parameterization $\tilde\gamma:=\gamma\circ \theta^{-1}$, where $\theta(t)=\int_0^tm(s)ds$ satisfies $\tilde{\gamma}'(t)=-\nabla f(\tilde \gamma(t))$ for all $t\in [0,+\infty)$.
\end{lem}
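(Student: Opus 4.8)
The plan is to adapt the proof of Lemma \ref{reductionLemma} with only the obvious modifications needed to handle the fact that the curve is now defined on the half-open interval $[0,L)$ and that $m$ blows up at $L$. The key observation is that the strict $(M)$-inequality, which is the only substantial hypothesis, is assumed on the whole open triangle $\{0\le t<s<L\}$, and this is exactly what is needed to verify conditions $(C)$ and $(CW1)$ of Theorem \ref{ThAM}; the blow-up condition \eqref{infinity} plays no role in checking $(C)$ and $(CW1)$, and is used only at the very end to produce the reparameterization on $[0,+\infty)$ via Lemma \ref{repar2} rather than Lemma \ref{repar}.

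Concretely, I would first pass to the compact exhaustion: for each $L'<L$, the curve $\gamma|_{[0,L']}$ together with the function $m|_{[0,L']}$ satisfies all the hypotheses of Lemma \ref{reductionLemma} (continuity, positivity, monotonicity, and the $(M)$-inequality on $[0,L']$), so Lemma \ref{reductionLemma} produces a $C^1$ convex extension on $\R^n$ of $f$ restricted to $\gamma([0,L'])$. However, since these extensions depend on $L'$, the cleaner route is to work directly on the (possibly non-compact, but still having compact closure) set $C=\overline{\gamma([0,L))}$. Note $\gamma([0,L))$ is bounded: by arc-length parameterization $\|\gamma(t)-\gamma(0)\|\le t\le L$, so $C$ is compact. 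One then defines $f$ on $C\cap \gamma([0,L))$ by the formula \eqref{deff}, namely $f(\gamma(t))=\int_t^{L}\frac{1}{m(\tau)}d\tau$, which is finite because $1/m$ is bounded (indeed $m$ is increasing and positive, so $1/m \le 1/m(0)$ and the integral is at most $(L-t)/m(0)$), and $G(\gamma(t))=-\gamma'(t)/m(t)$. Because $\gamma\in C^1([0,L))$ and $m$ is continuous with $\lim_{t\to L}m(t)=+\infty$, both $f(\gamma(t))\to 0$ and $G(\gamma(t))\to 0$ as $t\to L$, so $f$ and $G$ extend continuously to the limit point(s) of $\gamma$ at $L$ by setting them equal to $0$ and $0$ there; this makes $f:C\to\R$ and $G:C\to\R^n$ well-defined continuous functions on the compact set $C$. (If $\gamma(t)$ does not converge as $t\to L$, so that $C\setminus\gamma([0,L))$ contains several points, the same assignment $f\equiv0$, $G\equiv0$ on that residual set still works; one should check consistency, but the limiting values are forced.)

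Next I would verify $(C)$ and $(CW1)$ for $x=\gamma(t)$, $y=\gamma(s)$ with $0\le t<s<L$ exactly as in the two steps of the proof of Lemma \ref{reductionLemma}: Step 1 (the case $f(\gamma(t))-f(\gamma(s))\ge\langle G(\gamma(s)),\gamma(t)-\gamma(s)\rangle$) uses only that $1/m$ is strictly decreasing, arc-length, and Cauchy--Schwarz, with strictness coming from the decrease of $1/m$, so $(CW1)$ is automatic; Step 2 (the reverse case) reduces precisely to the $(M)$-inequality as assumed, again with strict inequality so $(CW1)$ is empty. It then remains to handle the pairs involving points of $C\setminus\gamma([0,L))$, i.e. where one or both of $x,y$ is a limit point of $\gamma$ at $L$. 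Since $f$ and $G$ vanish there, the required inequalities $f(x)-f(y)\ge\langle G(y),x-y\rangle$ follow by continuity: fix $x=\gamma(t)$ and let $y=\gamma(s_k)$ with $s_k\to L$; the inequality holds for each $k$ by Step 1, and passes to the limit; similarly with the roles reversed using Step 2 and the continuity of the left-hand side of $(M)$ together with the fact that the strict inequality degrades at worst to a non-strict one in the limit — and non-strict is all that $(C)$ requires. For $(CW1)$ at such limiting pairs: if equality holds in $(C)$, I would argue that this forces the relevant points to coincide or $G$ to agree, using that $f$ is already known (from Step 1's strictness) to be strictly monotone decreasing along the curve, so equality with a limit point at $L$ can occur only in degenerate situations that are easily checked; in all cases $G(x)=G(y)$ follows. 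Finally, Theorem \ref{ThAM} furnishes a $C^1$ convex $F:\R^n\to\R$ with $F=f$ on $C$ and $\nabla F=G$ on $C$; in particular $\gamma'(t)=-m(t)\nabla F(\gamma(t))$ on $[0,L)$, and Lemma \ref{repar2} — applicable precisely because $\lim_{t\to L}m(t)=+\infty$ — yields the desired reparameterization $\tilde\gamma=\gamma\circ\theta^{-1}$ on $[0,+\infty)$ solving $\tilde\gamma'=-\nabla F(\tilde\gamma)$.

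The main obstacle I anticipate is the bookkeeping around the residual set $C\setminus\gamma([0,L))$: one must be sure that the extended $f$ and $G$ are genuinely continuous on the compact set $C$ (not merely on $\gamma([0,L))$), that the values assigned at accumulation points are consistent, and that conditions $(C)$ and $(CW1)$ hold for pairs straddling that set. The honest way around this is to note that the limits $f(\gamma(t))\to0$ and $G(\gamma(t))\to0$ as $t\to L$ are uniform in the appropriate sense and force the extension uniquely, and that $(C)$ only requires a non-strict inequality so limiting arguments are harmless; $(CW1)$ is the only place where strictness matters, but there the strictness built into Step 1 (strict monotonicity of $1/m$, hence of $f$ along the curve) rules out the problematic equality cases. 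Apart from this, everything is a verbatim repetition of the proof of Lemma \ref{reductionLemma} with $[0,L]$ replaced by $[0,L)$ and Lemma \ref{repar} replaced by Lemma \ref{repar2}, so I would present the proof as ``identical to that of Lemma \ref{reductionLemma}, up to the following remarks,'' and then spell out only the three points above: finiteness of $f$, continuous extension to $C$ with vanishing limit, and the use of Lemma \ref{repar2}.
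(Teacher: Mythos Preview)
Your proposal is correct and follows essentially the same approach as the paper: note that $G(\gamma(t))=-\gamma'(t)/m(t)$ extends continuously to the endpoint (with value $0$) thanks to \eqref{infinity}, then repeat verbatim the argument of Lemma~\ref{reductionLemma} and invoke Lemma~\ref{repar2} in place of Lemma~\ref{repar}. Two small remarks: your worry about $\gamma(t)$ failing to converge as $t\to L$ is unnecessary, since by hypothesis $\gamma$ is defined on the closed interval $[0,L]$ and is $1$-Lipschitz (arc-length), so $C=\gamma([0,L])$ with a single added point $\gamma(L)$; and your careful treatment of $(CW1)$ at the pair $(\gamma(t),\gamma(L))$ is in fact more scrupulous than the paper, which simply asserts that the previous argument goes through.
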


\begin{proof}  The proof works the same way as the lemma before. The main point is to notice that, thanks to assumption \eqref{infinity}, the function 
$$
t\mapsto G(\gamma(t))=-\dfrac{\gamma'(t)}{m(t)},
$$
is continuous on $[0,L]$ (including the endpoint $L$). This allows to apply  Theorem \ref{ThAM} following the argument already used in Lemma \ref{reductionLemma}, and the reparameterization Lemma \ref{repar2} instead of Lemma \ref{repar}.
\end{proof}

\section{Proof in the $C^{1,\alpha}$ case}
\label{proofregular}

Thanks to Lemma \ref{reductionLemma}, to prove the first part of our main result  (Theorem \ref{theorem}) we are   reduced to find a suitable $m$ that satisfies the $(M)$-inequality. This is the content of the following lemma.

\begin{lem}\label{keyLemma1}  Let $\gamma:[0,L]\to\R^n$ be a strongly self-contracted curve in $C^{1,\alpha}([0,L])$ for some $\alpha\in (\frac{1}{2},1]$, parameterized by arc-length. Then the assumptions of Lemma \ref{reductionLemma} are satisfied with $m(t)=e^{b_0t}$, i.e. there exists $b_0>0$ (depending on $\gamma$) such that $m(t)=e^{b_0t}$  satisfies the $(M)$-inequality.
\end{lem}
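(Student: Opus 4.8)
The goal is to verify the $(M)$-inequality
$$m(t)\int_t^s \frac{1}{m(\tau)}\,d\tau < \langle \gamma'(t),\gamma(s)-\gamma(t)\rangle \qquad \forall\, 0\le t<s\le L$$
for the choice $m(t)=e^{b_0 t}$ with $b_0$ large. With this $m$ the left-hand side computes explicitly: $m(t)\int_t^s e^{-b_0\tau}\,d\tau = \frac{1}{b_0}\bigl(1-e^{-b_0(s-t)}\bigr)$, which depends only on $h:=s-t$ and equals $\frac{1}{b_0}(1-e^{-b_0 h})$. So we must show $\frac{1}{b_0}(1-e^{-b_0 h}) < \langle \gamma'(t),\gamma(s)-\gamma(t)\rangle$ for all admissible $t$ and $h=s-t\in(0,L-t]$. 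The natural split is into large $h$ and small $h$.

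\textbf{Large $h$.} Fix a threshold $\delta>0$. For $s-t\ge \delta$, the right-hand side is bounded below by $c_0\,\delta$, where $c_0>0$ is the uniform strong self-contractedness constant of Lemma \ref{uniformlyStrong} (applicable since $\gamma$ is $C^1$ and strongly self-contracted). The left-hand side is at most $\frac{1}{b_0}$ for every $h$. Hence if $b_0 > \frac{1}{c_0\delta}$, the inequality holds for all $s-t\ge\delta$ regardless of the value of $\delta$; we will fix $\delta$ in the next step and then choose $b_0$ large enough to cover this regime.

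\textbf{Small $h$.} Here I use the two second-order expansions. From the elementary estimate $1-e^{-x}\le x-\frac{x^2}{2}+\frac{x^3}{6}$ (or just $1-e^{-x} \le x - \frac{x^2}{2} + \frac{x^3}{6}$ for $x\ge 0$) one gets
$$\frac{1}{b_0}\bigl(1-e^{-b_0 h}\bigr) \le h - \frac{b_0}{2}h^2 + \frac{b_0^2}{6}h^3 \le h - \frac{b_0}{2}h^2 + C_1 h^2\cdot h,$$
so for $h$ small (say $h\le \delta$ with $\delta$ depending on $b_0$) the cubic correction is dominated and we obtain $\frac{1}{b_0}(1-e^{-b_0 h}) \le h - \frac{b_0}{4}h^2$. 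On the other side, Lemma \ref{TaylorGood} gives $\langle\gamma'(t),\gamma(s)-\gamma(t)\rangle \ge h - C h^{2\alpha+1}$. Since $\alpha>\frac12$ we have $2\alpha+1>2$, so $C h^{2\alpha+1} = C h^{2\alpha-1}\cdot h^2 = o(h^2)$ as $h\to 0$. Therefore, once $h$ is small enough that $C h^{2\alpha-1} < \frac{b_0}{4}$, we get $h - C h^{2\alpha+1} > h - \frac{b_0}{4}h^2 \ge \frac{1}{b_0}(1-e^{-b_0 h})$, which is exactly $(M)$. This determines a threshold $\delta=\delta(b_0,C,\alpha)>0$ below which $(M)$ holds.

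\textbf{Matching the two regimes.} The subtlety — and the only real point — is that $\delta$ from the small-$h$ analysis shrinks as $b_0$ grows (it requires $Ch^{2\alpha-1}<\frac{b_0}{4}$, so $\delta \sim b_0^{1/(2\alpha-1)}$, which actually grows; but the cubic-control condition $C_1 h < \frac{b_0}{4}$ forces $h \lesssim 1/b_0$, so in fact $\delta \sim 1/b_0$). The correct order of quantifiers is therefore: first pick $b_0$ large, which fixes $\delta \sim 1/b_0$ via the small-$h$ argument; then check that this same $b_0$ handles $h\ge\delta$ via the large-$h$ argument, i.e. that $b_0 > \frac{1}{c_0\delta}$. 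Since $\delta \sim 1/b_0$, this reads $b_0 \gtrsim b_0/c_0$, i.e. $b_0$ must beat a constant multiple of itself — so one must be careful with the implied constants and check that the constant from the small-$h$ step is compatible with $c_0$. Concretely: the small-$h$ step gives $(M)$ for $h \le \delta := \min\{\, c/b_0,\ (b_0/(4C))^{1/(2\alpha-1)}\}$ for an absolute constant $c$; for $h\ge\delta$ we need $b_0 > 1/(c_0\delta)$. Plugging in $\delta = c/b_0$ (the binding term for large $b_0$) gives the requirement $b_0^2 > b_0/(c c_0)$, i.e. $b_0 > 1/(c c_0)$, which is satisfiable. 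So choosing $b_0 = b_0(\gamma,\alpha)$ larger than this explicit threshold closes both cases and proves the $(M)$-inequality; the strict inequality is preserved throughout since Lemma \ref{TaylorGood}'s bound combined with $\alpha>\frac12$ gives strict slack for $h>0$ small, and the large-$h$ case has a genuine gap between $\frac{1}{b_0}$ (never attained) and $c_0\delta$. The main obstacle is precisely this bookkeeping of constants to ensure the two ranges of $h$ overlap for a single choice of $b_0$; the hypothesis $\alpha>\frac12$ is exactly what makes the small-scale term $o(h^2)$ and hence beatable by the fixed quadratic loss $\frac{b_0}{4}h^2$.
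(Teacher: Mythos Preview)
Your overall strategy is exactly the paper's: compute the left side explicitly as $\frac{1}{b_0}(1-e^{-b_0 h})$ with $h=s-t$, split into large and small $h$, use Lemma~\ref{uniformlyStrong} for large $h$ and Lemma~\ref{TaylorGood} for small $h$. However, the matching step contains an algebraic slip that leaves a genuine gap when $c_0$ is small.

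With your third-order bound $1-e^{-x}\le x-\tfrac{x^2}{2}+\tfrac{x^3}{6}$, the cubic-control condition $\tfrac{b_0^2}{6}h^3\le\tfrac{b_0}{4}h^2$ forces $h\le \tfrac{3}{2b_0}$, so $c=3/2$ in your notation $\delta=c/b_0$. The large-$h$ requirement is $b_0>1/(c_0\delta)=b_0/(cc_0)$. You wrote this as ``$b_0^2>b_0/(cc_0)$, i.e.\ $b_0>1/(cc_0)$'', but the correct simplification of $b_0>b_0/(cc_0)$ is $cc_0>1$, i.e.\ $c_0>2/3$. This is a constraint on the curve, not on $b_0$; for a strongly self-contracted curve with a small uniform constant $c_0$ your two regimes do not overlap for any choice of $b_0$. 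You even flagged exactly this danger (``one must be careful with the implied constants'') and then miscomputed the check.

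The paper's fix is to replace your Taylor polynomial by the integral-remainder form, which gives for all $h\ge 0$
\[
\frac{1}{b}\bigl(1-e^{-bh}\bigr)\;\le\; h-\frac{b}{2}\,e^{-bh}\,h^{2}.
\]
The paper takes the threshold from Case~1, namely $h\le \tfrac{1}{c_0 b}$, and on this range the quadratic coefficient satisfies $\tfrac{b}{2}e^{-bh}\ge \tfrac{b}{2}e^{-1/c_0}$, a \emph{fixed} multiple of $b$. Comparing with Lemma~\ref{TaylorGood} and bounding $(s-t)^{2\alpha-1}\le L^{2\alpha-1}$, one only needs $\tfrac{b}{2}e^{-1/c_0}>C_1 L^{2\alpha-1}$, which is achieved by taking $b$ large. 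Swapping your cubic bound for this estimate repairs the argument with no other change.
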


\begin{proof}  We seek for some function $m(t)=e^{bt}$   for some parameter $b>0$. Notice that, as required in Step 1 of the proof of Lemma \ref{reductionLemma}, $m(t)>0 $ and $t \mapsto m(t)$   is increasing.

We  can directly compute
$$\int_{t}^s e^{-b\tau}d\tau =\big[\frac{1}{-b}e^{-b\tau}\big]_t^s=\frac{1}{b}\left( e^{-bt}-e^{-bs}\right).$$
By consequence, $(M)$-inequality now becomes
\begin{eqnarray}
\frac{1}{b}\left(1-e^{-b(s-t)}\right)< \langle \gamma'(t),\gamma(s)-\gamma(t)\rangle,  \quad \forall t<s\label{mainIneq_b}
\end{eqnarray}
and we are reduced to find some $b>0$ large enough so that \eqref{mainIneq_b} holds.

For this purpose we consider two cases.

{\bf Case 1}.  For $(s-t)> \frac{1}{c_0b}$. We notice that $(M)$-inequality is always satisfied in  this case. Indeed, by  the strongly self-contracted property and Lemma \ref{uniformlyStrong} we know that 
$$ \big\langle \gamma'(t),\frac{\gamma(s)-\gamma(t)}{s-t} \big\rangle \geq c_0>0 \quad \forall\, 0\leq t<s \leq L,$$
so we can estimate
$$\frac{1}{b}\left(1-e^{-b(s-t)}\right) \leq \frac{1}{b}< c_0 (s-t) \leq \big\langle \gamma'(t),\gamma(s)-\gamma(t) \big\rangle, $$
provided that 
$$\frac{1}{c_0 b}< (s-t).$$
We are therefore  reduced to check $(M)$-inequality when $(s-t)\leq \frac{1}{c_0 b}$.

{\bf Case 2}. For $(s-t)\leq \frac{1}{c_0b}$. In this case we use   the exact  Taylor formula  with integral rest to obtain the following inequality which holds for any $h\geq 0$,
\begin{eqnarray}
 e^{-bh}&=&1-bh+b^2\int_0^h(h-\tau)e^{-b\tau}d\tau \notag \\
 &\geq& 1-bh+b^2e^{-bh}\int_0^h(h-\tau)d\tau \notag \\
 &=&1-bh+b^2e^{-bh}\frac{h^2}{2} \notag
 \end{eqnarray}
from which we deduce 
\begin{eqnarray}
\frac{1}{b}\left(1-e^{-b(s-t)}\right) \leq (s-t)-be^{-b(s-t)}\frac{(s-t)^2}{2}. \label{expB} \end{eqnarray}

On the other hand since the curve is $C^{1,\alpha}$ on $[0,L]$   we can use Lemma \ref{TaylorGood} which yields,
$$\langle \gamma'(t), \gamma(s)-\gamma(t) \big\rangle \geq (s-t)-C_1(s-t)^{2\alpha+1}.$$
Returning to \eqref{expB}, we see that  \eqref{mainIneq_b} would hold true if we show that 
$$(s-t)-be^{-b(s-t)}\frac{(s-t)^2}{2}<(s-t)-C_1(s-t)^{2\alpha +1},$$
or equivalently if 
$$\frac{1}{2}be^{-b(s-t)} >C_1(s-t)^{2\alpha-1}.$$
Now we use that $2\alpha-1\geq 0$ so that 
$C_1':=C_1 ({\rm length}(\gamma))^{2\alpha-1}\geq C_1(s-t)^{2\alpha-1}$,
and what we are reduced to prove is  now
$$\frac{1}{2}be^{-b(s-t)} > C_1'.$$
Also from Case 1, we know that we need to prove the above inequality only for $(s-t)\leq \frac{1}{c_0 b}$.
In this case we have that 
$$\frac{1}{2}be^{-b(s-t)}\geq \frac{1}{2}be^{-\frac{1}{c_0}}.$$
Thus it is enough to find $b$ such that $\frac{1}{2}be^{-\frac{1}{c_0}}> C_1'$ and for instance the choice $b= 3C_1'e^{\frac{1}{c_0}}$ works.
\end{proof}

We can now give our proof of Theorem \ref{theorem}, first part.

\begin{proof}[Proof of Theorem \ref{theorem}, first part]  The ``if'' case directly follows from Lemma \ref{keyLemma1} and Lemma \ref{reductionLemma} thus we only need  to prove  the ``only if'' part. For this purpose we assume there exists a reparameterization $\widetilde{\gamma}=[0,T]\to \R^n$ of $\gamma$, with no stationary points, and a convex $C^1$-function $f:\R^n\to \R$ such that $\widetilde{\gamma}^{\prime}(t)=-\nabla f( \widetilde{\gamma}(t))$ for all $t \in [0,T]$.

If this is the case, then 
$$
(f(\widetilde{\gamma}(t)))'=\langle\nabla f(\widetilde{\gamma}(t)), \widetilde{\gamma}'(t)\rangle=-\|\widetilde{\gamma}'(t)\|^2.
$$

As a consequence, we can integrate the above identity which yields the following necessary expression of the function $f$ restricted to  the curve, 
\begin{equation*}
f(\widetilde{\gamma}(t))=\int_t^{T}\|\widetilde{\gamma}'(\tau)\|^2d\tau.
\end{equation*}

Now we apply Theorem \ref{ThAM} to $C=\tilde{\gamma}([0,T])$ ($C$ is compact because $\gamma$ is bounded) and $G(\tilde{\gamma}(t))=-\tilde{\gamma}'(t)$ so we know that conditions $(C)$ and $(CW1)$ hold. Then given $x,y\in C$,
 \[
 f(y)-f(x)\geq\langle G(x),y-x\rangle.
 \]
Without loss of generality we can assume $t\leq s$, $x=\tilde{\gamma}(t)$ and $y=\tilde{\gamma}(s)$. Observe that in this case, $f(\tilde{\gamma}(t))-f(\tilde{\gamma}(s))\geq 0$ and the following inequality has to be satisfied for any $t\leq s$
\[
f(\tilde{\gamma}(t))-f(\tilde{\gamma}(s))\leq\langle -\tilde{\gamma}'(t),\tilde{\gamma}(t)-\tilde{\gamma}(s)\rangle,
\]
so $\langle -\tilde{\gamma}'(t),\tilde{\gamma}(t)-\tilde{\gamma}(s)\rangle\geq 0$.
If $\langle -\tilde{\gamma}'(t),\tilde{\gamma}(t)-\tilde{\gamma}(s)\rangle=0$, then $\int_t^{s}\|\gamma'(\tau)\|^2d\tau=0$ and so $\gamma'(\tau)=0$ for any $t\leq\tau\leq s$, but this contradicts the fact that $\gamma$ has no stationary points and so $\langle -\tilde{\gamma}'(t),\tilde{\gamma}(t)-\tilde{\gamma}(s)\rangle> 0$. Therefore $\tilde{\gamma}$ and so $\gamma$ must be strongly self-contracted.
\end{proof}

\begin{remark}\em
Another proof of the ``only if'' part can be found in \cite[Corollary 4.11]{DDDL}. Notice that for this part we only need the curve to be $C^{1}$.
\end{remark}

\section{Proof with minimum of $f$ at the endpoint}\label{spiral}

We now would like to modify the proof so that the convex function $f$ achieves a minimum at the tip $\gamma(L)$ of the curve, in particular $\nabla f(\gamma(L))=0$. To do so, we need the reparameterized curve $\tilde \gamma$ to arrive with zero speed at the tip. Translated to $m$, we need $\displaystyle{\lim_{t\to L}m(t)=+\infty}$.

We are able to construct such a function $m$, by requiring more regularity on the arc-length parameterized curve $\gamma$. Here is our  result.

\begin{prop}\label{mainTh2} 
 Let $\gamma:[0,L]\to\R^n$ be an arc-length parameterized strongly self-contracted curve in $C^{3}([0,L])$. 
 Then there exists a $C^1$ convex function $f:\R^n\to \R$ such that the reparameterized curve $\tilde \gamma:[0,+\infty]\to \R^n$ satisfies the gradient flow equation $\tilde \gamma'(t)=-\nabla f(\tilde \gamma(t))$ on $[0,+\infty]$ and $f$ admits a minimum at $\tilde \gamma(\infty)$.
\end{prop}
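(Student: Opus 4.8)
By Lemma~\ref{reductionLemma2}, it suffices to produce a continuous, increasing, positive function $m:[0,L)\to\R^+_*$ with $\lim_{t\to L}m(t)=+\infty$ that satisfies the $(M)$-inequality
$$m(t)\int_t^s\frac{1}{m(\tau)}\,d\tau<\langle\gamma'(t),\gamma(s)-\gamma(t)\rangle\qquad\forall\,0\le t<s<L.$$
The exponential profile $m(t)=e^{b_0t}$ from Lemma~\ref{keyLemma1} is bounded on $[0,L]$, so it does not qualify; the plan is to multiply it by a factor that blows up at $L$ but slowly enough that the $(M)$-inequality is preserved. A natural candidate is $m(t)=e^{b_0t}(L-t)^{-p}$ for a small exponent $p>0$, or more flexibly $m(t)=e^{b_0 t}\,\varphi(t)$ with $\varphi$ increasing, $\varphi(0)\ge 1$, $\varphi(t)\to+\infty$ as $t\to L$, and $\varphi$ varying slowly. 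The point is that $1/m$ is then integrable near $L$ (so $\int_t^s 1/m$ makes sense), and on any compact subinterval $[0,L-\delta]$ the new $m$ differs from an exponential only by a bounded, bounded-away-from-zero factor, so the estimates of Lemma~\ref{keyLemma1} go through there almost verbatim after adjusting constants.

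\textbf{Key steps, in order.} First, I would record that a $C^3$ arc-length curve is in particular $C^{1,1}$, hence Lemma~\ref{TaylorGood} applies with $\alpha=1$, giving $\langle\gamma'(t),\gamma(s)-\gamma(t)\rangle\ge (s-t)-C_1(s-t)^3$, and also that $\gamma$ is uniformly strongly self-contracted by Lemma~\ref{uniformlyStrong}, so $\langle\gamma'(t),\gamma(s)-\gamma(t)\rangle\ge c_0(s-t)$. Second, split into the two regimes as in Lemma~\ref{keyLemma1}: for $s-t$ bounded below by a fixed constant (depending on the chosen parameters) one controls the left side of $(M)$ crudely by $m(t)\int_t^s 1/m$ and uses the linear lower bound $c_0(s-t)$; here one must verify $m(t)\int_t^s 1/m(\tau)\,d\tau$ stays small, which requires that $1/m$ decays fast — this is where the exponential factor $e^{b_0 t}$, not the slowly varying $\varphi$, does the work, so choosing $b_0$ large enough as a function of $\varphi$ and $c_0$ handles this regime. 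Third, in the regime $s-t$ small, use the integral-remainder Taylor expansion of $1/m$ about $t$, exactly as in \eqref{expB}: writing $\tfrac{m(t)}{m(\tau)}=1-(\log m)'(t)(\tau-t)+\dots$, one gets
$$m(t)\int_t^s\frac{1}{m(\tau)}\,d\tau\le (s-t)-\kappa(t)\frac{(s-t)^2}{2}+o((s-t)^2),$$
where $\kappa(t)=(\log m)'(t)=b_0+(\log\varphi)'(t)$, and one needs the gain $\kappa(t)\tfrac{(s-t)^2}{2}$ to beat the loss $C_1(s-t)^3$ coming from Lemma~\ref{TaylorGood}. Since $\kappa(t)\ge b_0$ and $s-t$ is small, this reduces to $\kappa(t)\gtrsim C_1(s-t)$, which holds once $b_0>2C_1 L$ (or after further restricting the small regime). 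The delicate point is that the Taylor remainder terms for $1/m$ now involve derivatives of $\varphi$, which may blow up as $t\to L$; one must check that these are dominated by the gain $\kappa(t)(s-t)^2$ uniformly, which is why $\varphi$ must be chosen with controlled logarithmic derivatives, e.g. $(\log\varphi)'(t)=\tfrac{p}{L-t}$ so that the relevant ratios stay bounded.

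\textbf{Main obstacle.} The hard part is the interplay near the tip $t\to L$: there $1/m(\tau)\to 0$, so $\int_t^s 1/m$ is genuinely small and the $(M)$-inequality looks favorable, but simultaneously the higher-order terms in the Taylor expansion of $1/m$ (governed by $\varphi''/\varphi$, $(\varphi'/\varphi)^2$, etc.) degenerate, and the simple bound $C_1'=C_1(\mathrm{length}(\gamma))^{2\alpha-1}$ used in Lemma~\ref{keyLemma1} to absorb the $(s-t)^{2\alpha-1}$ factor no longer has an analogous clean substitute because the gain term $\kappa(t)$ is no longer constant. I expect the resolution is to choose $\varphi$ of a very specific slowly varying form — such as $\varphi(t)=(L-t)^{-p}$ with $p$ small, or $\varphi(t)=\exp\big(\log\tfrac{1}{L-t}\big)^{1/2}$ type profiles — and to carry out the small-scale estimate with an explicit $s$-dependent threshold $s-t\le \eta(t)$ that itself shrinks as $t\to L$ at a controlled rate, feeding back into the large-scale regime. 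Managing this two-parameter bookkeeping (the split threshold depending on $t$, and $b_0$, $p$ chosen afterward in terms of $C_1$, $c_0$, $L$) without circularity is the real content; the $C^3$ hypothesis presumably enters precisely to give enough room in the remainder estimates, since $C^{1,\alpha}$ with $\alpha<1$ would not leave a positive margin once $\kappa(t)$ is allowed to vary.
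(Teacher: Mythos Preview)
Your overall strategy is exactly the paper's: reduce to Lemma~\ref{reductionLemma2} and construct an $m$ blowing up at $L$ satisfying $(M)$. You also correctly anticipate that the regime split must become $t$-dependent and that the $C^3$ hypothesis feeds the lower bound $\langle\gamma'(t),\gamma(s)-\gamma(t)\rangle\ge (s-t)-C_1(s-t)^3$. Where you diverge is in the \emph{form} of $m$, and this is precisely where your self-identified obstacle lives.

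With your ansatz $m(t)=e^{b_0 t}\varphi(t)$ the integral $\int_t^s 1/m$ is not explicit, so you are forced into a Taylor expansion of $m(t)/m(\tau)$ whose remainder involves $(\log m)''$, hence $\tfrac{1}{(L-t)^2}$-type terms that blow up at the tip. Your $o((s-t)^2)$ is therefore non-uniform in $t$, and you never close this; the proposal ends by naming the difficulty rather than resolving it. The paper sidesteps the whole issue by choosing
\[
m(t)=\frac{1}{\varphi'(t)}e^{\varphi(t)},\qquad \varphi(t)=b\Big(Lt-\tfrac{t^2}{2}\Big),\quad \varphi'(t)=b(L-t),
\]
so that $1/m=\varphi'e^{-\varphi}=-\tfrac{d}{d\tau}e^{-\varphi}$ is an \emph{exact} derivative. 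Then
\[
m(t)\int_t^s\frac{d\tau}{m(\tau)}=\frac{1}{b(L-t)}\Big(1-e^{-(\varphi(s)-\varphi(t))}\Big),
\]
and the single elementary bound $1-e^{-h}\le h$ (no second-order Taylor, no remainder to control) gives directly
\[
m(t)\int_t^s\frac{d\tau}{m(\tau)}\le (s-t)-\frac{(s-t)^2}{2(L-t)}.
\]
The $t$-dependent threshold you anticipated is $s-t\lessgtr \tfrac{1}{bc_0(L-t)}$: in the large regime the crude bound $\tfrac{1}{b(L-t)}<c_0(s-t)$ suffices, and in the small regime the needed inequality $\tfrac{1}{L-t}>2C_1(s-t)$ follows from $\tfrac{1}{L-t}\ge bc_0(s-t)$ once $b>2C_1/c_0$. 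Thus the ``two-parameter bookkeeping without circularity'' you worry about collapses to a single choice of $b$. The moral: rather than perturbing the exponential of Lemma~\ref{keyLemma1} by a slowly varying factor, pick $m$ so that $1/m$ has a closed-form antiderivative; the quadratic $\varphi$ is engineered so that the resulting gain term is exactly $\tfrac{1}{L-t}$, which matches the threshold.
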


\begin{remark}\em Proposition \ref{mainTh2}  directly  implies the last part of Theorem \ref{theorem}.
\end{remark}

Due to the ``key Lemma'', non-compact case (Lemma \ref{reductionLemma2}),   to prove Proposition \ref{mainTh2} it is enough to find a suitable $m$ that satisfies the $(M)$-inequality. This is the content of the  following lemma. Notice that the proof is similar to the proof of Lemma \ref{keyLemma1} but we need this time to use a more complicated construction for the function  $m$ to satisfy the required condition $\displaystyle{\lim_{t\to L}m(L)}=\infty$.

\begin{lem}\label{C2alpha} Let $\gamma:[0,L]\to\R^n$ be an arc-length parameterized strongly self-contracted curve in $C^{3}([0,L])$. There exists $b>0$ (depending on $\gamma$) such that 
$$
m(t)=\frac{1}{b(L-t)}e^{b(Lt-\frac{t^2}{2})},
$$
satisfies the $(M)$-inequality.
\end{lem}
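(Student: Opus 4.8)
The goal is to verify the $(M)$-inequality
$$m(t)\int_t^s \frac{1}{m(\tau)}\,d\tau < \langle \gamma'(t),\gamma(s)-\gamma(t)\rangle \qquad \forall\, 0\le t<s<L$$
for the specific profile $m(t)=\frac{1}{b(L-t)}e^{b(Lt-t^2/2)}$. The first step is to compute the left-hand side explicitly. Writing $\varphi(t)=b(Lt-t^2/2)$ so that $\varphi'(t)=b(L-t)$ and $1/m(\tau)=b(L-\tau)e^{-\varphi(\tau)}=\varphi'(\tau)e^{-\varphi(\tau)}=-\frac{d}{d\tau}e^{-\varphi(\tau)}$, the integral telescopes:
$$\int_t^s \frac{1}{m(\tau)}\,d\tau = e^{-\varphi(t)}-e^{-\varphi(s)},$$
so that $m(t)\int_t^s \frac{1}{m(\tau)}\,d\tau = \frac{1}{b(L-t)}\bigl(1-e^{\varphi(t)-\varphi(s)}\bigr)$. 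Setting $h=s-t\ge 0$ one has $\varphi(s)-\varphi(t)=b(L-t)h - b h^2/2$, so the LHS equals $\frac{1}{b(L-t)}\bigl(1-e^{-b(L-t)h+bh^2/2}\bigr)$.

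The second step mimics the two-case argument of Lemma \ref{keyLemma1}. For the regime $h$ not too small (say $h \ge \delta$ for a suitable threshold depending on $b$), the LHS is bounded by $\frac{1}{b(L-t)}$, and since $L-t \ge L-s \ge$ something — here one must be slightly careful because $L-t$ can be small; but note $h=s-t<L-t$, so $\frac{1}{b(L-t)}<\frac{1}{bh}\le \frac{1}{b\delta}$, whereas the RHS is $\ge c_0 h \ge c_0\delta$ by uniform strong self-contractedness (Lemma \ref{uniformlyStrong}), so it suffices to take $b$ large enough that $\frac{1}{b\delta}<c_0\delta$; one should choose $\delta$ and $b$ compatibly, e.g. $\delta = 1/(c_0 b)$ as before, giving the condition automatically. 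The delicate regime is $h$ small. Here I would Taylor-expand the LHS to second order in $h$. Using $e^{-x}\ge 1-x$ with $x = b(L-t)h - bh^2/2$, or more precisely the exact integral-remainder expansion as in Lemma \ref{keyLemma1}, one gets
$$\frac{1}{b(L-t)}\bigl(1-e^{-b(L-t)h+bh^2/2}\bigr) \le \frac{1}{b(L-t)}\Bigl(b(L-t)h - \tfrac{b h^2}{2} - c\,(b(L-t)h)^2\Bigr)$$
for some controlled error; the leading term is exactly $h=s-t$, and the crucial next-order term is $-\frac{h^2}{2(L-t)}$ coming from the $-bh^2/2$ in the exponent. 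So the LHS is at most $(s-t) - \frac{(s-t)^2}{2(L-t)} + (\text{error of order } b h^3 \text{ or } b^2 h^3)$. Meanwhile, since $\gamma\in C^3\subset C^{1,1}$, Lemma \ref{TaylorGood} with $\alpha=1$ gives $\langle\gamma'(t),\gamma(s)-\gamma(t)\rangle \ge (s-t) - C_1(s-t)^3$. Comparing, the $(M)$-inequality reduces to showing that the negative quadratic term $-\frac{(s-t)^2}{2(L-t)}$ dominates the cubic errors on both sides, i.e. roughly $\frac{(s-t)^2}{2(L-t)} > (C_1 + c b)(s-t)^3$ for $h$ small, which holds once $s-t$ is smaller than a constant times $\frac{L-t}{1+b}$.

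\textbf{Main obstacle.} The genuine difficulty, and the reason $C^3$ rather than merely $C^{1,\alpha}$ is assumed, is uniformity as $t\to L$: the "gain" term $\frac{(s-t)^2}{2(L-t)}$ becomes large relative to $(s-t)^2$, which helps, but the cubic error on the left also carries factors that must be controlled as $L-t\to 0$, and the window of validity of the "small $h$" estimate shrinks like $L-t$. So I would need a more careful three-regime split: (i) $h$ comparable to $L-t$, (ii) $h$ small compared to $L-t$ but $t$ near $L$, (iii) $t$ bounded away from $L$. In regime (i), when $s$ is also close to $L$, the RHS $\langle\gamma'(t),\gamma(s)-\gamma(t)\rangle$ is itself small (order $L-t$), and one must check the LHS is strictly smaller — here the exact formula $\frac{1}{b(L-t)}(1-e^{-b(L-t)h+bh^2/2})$ with $h$ close to $L-t$ should be analyzed directly, noting the exponent $-b(L-t)h+bh^2/2$ stays bounded so the bracket stays bounded away from its linearization only mildly. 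I expect that using the $C^3$ (hence $C^{1,1}$) bound in Lemma \ref{TaylorGood} together with the uniform lower bound $c_0$ and a single large choice of $b$ depending on $c_0$, $C_1$, $L$ and $\|\gamma\|_{C^3}$ suffices to close all regimes; reconciling the small-$h$ Taylor estimate with the $t\to L$ boundary — so that one fixed $b$ works simultaneously — is the step I anticipate requires the most care.
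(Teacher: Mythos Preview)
Your computation of the left-hand side is correct, and you correctly identify the two key ingredients (uniform strong self-contractedness from Lemma~\ref{uniformlyStrong} and a Taylor-type lower bound on the right-hand side). However, the proof you outline is more complicated than necessary and, as written, contains a gap in the case split; the ``Main obstacle'' you describe is in fact self-inflicted.

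\medskip
\textbf{The case split.} Splitting on a fixed threshold $h\ge\delta$ versus $h<\delta$ does not work cleanly here. With your suggestion $\delta=1/(c_0 b)$, the chain $\frac{1}{b(L-t)}<\frac{1}{bh}\le \frac{1}{b\delta}=c_0$ does \emph{not} yield $\frac{1}{b\delta}<c_0\delta$ (this would require $c_0 b\delta^2>1$, i.e.\ $c_0 b<1$, which fails for large $b$). The paper instead splits on the condition
\[
\frac{1}{b(L-t)} < c_0(s-t)
\]
itself. If this holds, then $m(t)\int_t^s \frac{1}{m}\,d\tau \le \frac{1}{b(L-t)} < c_0(s-t)\le \langle\gamma'(t),\gamma(s)-\gamma(t)\rangle$ and we are done. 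In the complementary case one has the reverse inequality
\[
\frac{1}{L-t}\ge b c_0 (s-t),
\]
and this \emph{adaptive} lower bound on $\frac{1}{L-t}$ is precisely what is used in the second step. Note that this split automatically handles the region $t\to L$: there is no need for a three-regime decomposition.

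\medskip
\textbf{The left-hand side bound.} You mention $e^{-x}\ge 1-x$ but then pass to a higher-order expansion, which produces the cubic error term you later struggle with. The crude bound is all that is needed: with $x=b\bigl((L-t)h-\tfrac{h^2}{2}\bigr)>0$ (positive since $h=s-t<L-t$), one gets directly
\[
m(t)\int_t^s\frac{1}{m(\tau)}\,d\tau=\frac{1-e^{-x}}{b(L-t)}\;\le\;\frac{x}{b(L-t)}\;=\;(s-t)-\frac{(s-t)^2}{2(L-t)},
\]
with \emph{no} error term whatsoever. Since $\gamma\in C^3$, a third-order Taylor expansion gives $\langle\gamma'(t),\gamma(s)-\gamma(t)\rangle\ge (s-t)-C_1(s-t)^3$ with $C_1$ depending only on $\|\gamma^{(3)}\|_\infty$. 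The $(M)$-inequality then reduces to
\[
\frac{1}{2(L-t)}>C_1(s-t),
\]
which follows from the complementary-case assumption $\frac{1}{L-t}\ge bc_0(s-t)$ as soon as $b>2C_1/c_0$. That is the entire argument: one choice of $b$ works uniformly, and no boundary analysis near $t=L$ is required.
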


\begin{proof}  
The aim is to find some parameter $b>0$ for which the function
$$
m(t)=\frac{1}{b(L-t)}e^{b(Lt-\frac{t^2}{2})},
$$
satisfies the $(M)$-inequality, namely,
$$
(M)\quad \quad \quad m(t)\int_{t}^s \frac{1}{m(\tau)}d\tau < \langle \gamma'(t) , \gamma(s)-\gamma(t)\rangle \quad \forall \,0\leq t<s\leq L. 
$$

To lighten the notation we shall denote by $\varphi(t):=b(Lt-\frac{t^2}{2})$ in such a way that 
$$m(t)=\frac{1}{\varphi'(t)}e^{\varphi(t)}.$$
Notice that $\varphi'(t)>0$   and $\varphi(t)> 0$ on $[0,L)$.

We first compute the left hand side of $(M)$-inequality:
\begin{eqnarray}
m(t)\int_{t}^s \frac{1}{m(\tau)}d\tau&=&m(t)\int_{t}^s \varphi'(\tau) e^{-\varphi(\tau)}d\tau  \notag \\
&=& m(t) (e^{-\varphi(t)}-e^{-\varphi(s)}) \notag \\
&=&\frac{1}{b(L-t)}\left( 1-e^{-b(L(s-t)+\frac{1}{2}(t^2-s^2))}\right). \notag
\end{eqnarray}

Now we use the uniformly strong self-contracted property to notice that, inequality $(M)$ is always satisfied in the case when 
$$\frac{1}{b(L-t)}< c_0(s-t).$$
Indeed, if the above holds true then
$$m(t)\int_t^s \frac{1}{m(\tau)}d\tau=\frac{1}{b(L-t)}\left( 1-e^{-b(L(s-t)+\frac{1}{2}(t^2-s^2))}\right)\leq \frac{1}{b(L-t)}< c_0(s-t)\leq \langle \gamma'(t) , \gamma(s)-\gamma(t) \rangle ,$$
as desired.

In conclusion, we can assume in the sequel that 
\begin{eqnarray}
\frac{1}{(L-t)}\geq b c_0(s-t). \label{weassume}
\end{eqnarray}

Then we use the mean value theorem to get the following  inequality
$$1-e^{-bh}\leq bh  \; ,\quad \forall h\geq 0,$$
from which we can estimate for $h=L(s-t)+\frac{1}{2}(t^2-s^2)$
\begin{eqnarray}
\frac{1}{b(L-t)}\left( 1-e^{-b(L(s-t)+\frac{1}{2}(t^2-s^2))}\right)\leq \frac{L(s-t)+\frac{1}{2}(t^2-s^2)}{L-t}=\frac{(s-t)(L-t)-\frac{(s-t)^2}{2}}{L-t}, \label{estimate1}
 \end{eqnarray} 

which gives us
\begin{eqnarray*}
 m(t)\int_t^s \frac{1}{m(\tau)}d\tau \leq (s-t)-\frac{1}{(L-t)}\frac{(s-t)^2}{2}.  \end{eqnarray*}

On the other hand since the curve is $C^{3}$ on $[0,L]$, a Taylor expansion of $\gamma$ gives us that 
$$\langle \gamma'(t), \gamma(s)-\gamma(t) \big\rangle \geq (s-t)-C_1(s-t)^{3},$$
with $C_1$ depending only on $\|\gamma^{(3)}\|_{\infty}$.
Now $(M)$-inequality is satisfied provided
$$(s-t)-\frac{1}{(L-t)}\frac{(s-t)^2}{2}< (s-t)-C_1(s-t)^{3},$$
or equivalently if 
$$\frac{1}{(L-t)} > 2C_1(s-t).$$
But now using \eqref{weassume} it is guaranteed if we choose $b$ large enough so that $b c_0> 2C_1$. This ends the proof.
\end{proof}

\begin{remark}\em\label{reguassumptions}
By assuming a more technical condition, we can weaken the hypotheses of Lemma \ref{C2alpha}, to work in the class 
of $C^1([0,L))$ curves. Indeed, the construction of $m$ in the proof of  Lemma \ref{C2alpha} holds if we assume instead of $C^3([0,L])$ that $\gamma$ is a uniformly strongly self-contracted arc-length parameterized curve that belongs to $C^{1}([0,L))$ and that there exists a positive and continuous  increasing function $\zeta : [0,L)\to [0,+\infty]$ such that 
\begin{eqnarray}
A:=\int_0^L \zeta(\tau) d \tau < +\infty \label{zeTA1}
\end{eqnarray}
and
\begin{eqnarray}
\big\langle \gamma'(t), \frac{\gamma(s)-\gamma(t)}{s-t}\big \rangle \geq 1-\zeta(s)(s-t)^2 \quad \quad \forall \,0\leq t<s<+\infty. \label{zeTA}
\end{eqnarray}
For instance if the curve is $C^3([0,L))$ one can choose $\zeta(t):=\sup_{\tau \in [0,t]}\|\gamma^{(3)}(\tau)\|$.

In this case one can use the same proof as the one of Lemma \ref{C2alpha} with  the choice 
$$m(t)=\frac{1}{\varphi'(t)}e^{\varphi(t)},$$
and where $\varphi(t)$ is constructed as follows:
$$\varphi'(t)=\frac{2A}{c_0}-\frac{2}{c_0}\int_0^t \zeta(\tau)\,d\tau , \quad  \text{ and then }\quad \varphi(t)=\int_0^t\varphi'(\tau)\,d\tau.$$

It would be  interesting to find an explicit example of a self-contracted spiral satisfying \eqref{zeTA1} and  \eqref{zeTA}, but unfortunately, we do not know whether this would be possible. A motivation is given in the following remark.
\end{remark}

 \begin{remark}\em\label{spirals}
There exist strongly self-contracted curves that turn around its limit point infinitely many times and so $\gamma'$ does not admit a limit at the tip.
Let $\lambda\in(0,\infty)$ and consider the class of logarithmic spirals
 \[
 \gamma_{\lambda}(t)=(e^{-t\lambda}\cos t,e^{-t\lambda}\sin t)\quad \forall t\geq 0.
 \]
 It was shown in \cite{MP} that there exists a curve $\gamma_{\lambda_0}$ with 
$\lambda_0=\exp(\frac{-3\pi\lambda_0}{2})$ such that for any point $t\in I$ there exists a unique $u>t$ such that the normal to the tangent line at $\gamma_{\lambda_0}(t)$ is tangent at $\gamma_{\lambda_0}(u)$. Moreover, it is the only $C^1$ curve (up to a rigid motion) satisfying that property and whose length equals the perimeter of its convex hull. We can now classify logarithmic spirals in terms of the parameter $\lambda_0$.
 \begin{itemize}
 \item If $\lambda<\lambda_0$, $ \gamma_{\lambda}$ is not self-contracted.
 \item If $\lambda=\lambda_0$, $ \gamma_{\lambda}$ is self-contracted but not strongly self-contracted.
 \item If $\lambda>\lambda_0$, $ \gamma_{\lambda}$ is uniformly strongly self-contracted.
 \end{itemize} 
The arc-length parameterized spiral for $\lambda>\lambda_0$ is a strongly self-contracted curve that belongs to $C^{\infty}([0,L))$ but in this case our result does not apply (since we need regularity up to the tip).  In \cite[7.2]{DLS} the authors prove that there exists a convex function for which orbits to the associated gradient flow can spiral and, as a byproduct, the so-called Thom conjecture fails for convex functions. It would be   interesting to know if we can obtain Theorem \ref{theorem} for the family of strongly self-contracted spirals.
\begin{figure}[h]
\begin{center}
\includegraphics[width=4cm]{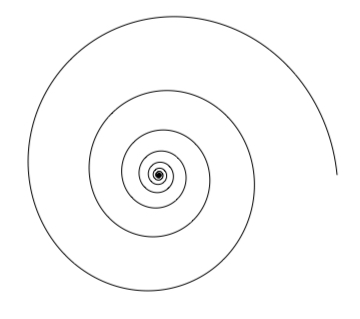}
\includegraphics[width=4cm]{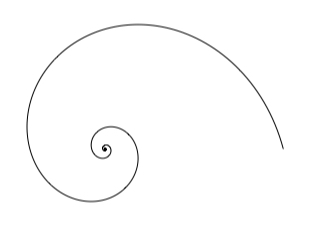}
\includegraphics[width=4cm]{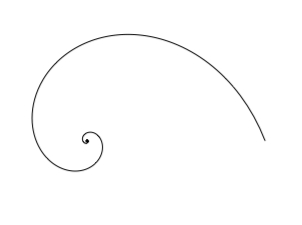}
\caption{Logarithmic spirals with $\lambda<\lambda_0$, $\lambda=\lambda_0$ and $\lambda>\lambda_0$. }\label{ejes}
\end{center}
\end{figure}
 \end{remark}

\end{document}